\newtheorem{theo+}              {Theorem}           [section]
\newtheorem{prop+}  [theo+]     {Proposition}
\newtheorem{coro+}  [theo+]     {Corollary}
\newtheorem{lemm+}  [theo+]     {Lemma}
\newtheorem{exam+}  [theo+]     {Example}
\newtheorem{rema+}  [theo+]     {Remark}
\newtheorem{defi+}  [theo+]     {Definition}
\newenvironment{theorem}{\begin{theo+}}{\end{theo+}}
\newenvironment{corollary}{\begin{coro+}}{\end{coro+}}
\newenvironment{definition}{\begin{defi+}}{\end{defi+}}
\theoremstyle{plain} \theoremstyle{remark}
\newtheorem{remark}{Remark}
\newtheorem{example}{Example}
\def \r{\mbox{${\mathbb R}$}}
\def\E{/\kern-1.0em \equiv }
\def\sdo{pseudo-Riemannian\;}
\title{ Biharmonic submanifolds of pseudo-Riemannian manifolds}
\author{Yuxin Dong$^*$ and Ye-Lin Ou$^{**}$}
\address{Institute of Mathematics, \newline\indent Fudan University, \newline\indent
Shanghai 200433,
\newline\indent 
P. R. China
\newline\indent 
Email: yxdong@fudan.edu.cn
\newline\indent 
\newline\indent 
Department of
Mathematics,\newline\indent Texas A $\&$ M University-Commerce,
\newline\indent Commerce TX 75429,\newline\indent USA.\newline\indent
E-mail:yelin.ou@tamuc.edu}
\thanks{$^*$ Supported by NSFC grant No. 11271071, and LMNS, Fudan}
\thanks{$^{**}$ Partially supported by the Texas A $\&$ M University-Commerce
Faculty Development Program (2015). The author is also grateful to Shanghai Center for Mathematical Sciences for the support of a two-month visit during which the work was done.}
\begin{document}
\title[Biharmonic submanifolds of pseudo-Riemannian manifolds ]{Biharmonic submanifolds of pseudo-Riemannian manifolds}
\date{12/02/2015}
\subjclass{58E20, 53C12} \keywords{Biharmonic pseudo-Riemannian submanifolds, biharmonic hypersurfaces, minimal submanifolds, pseudo-Riemannian space forms.}
\maketitle
\section*{Abstract}
\begin{quote} In this paper, we derived biharmonic equations for pseudo-Riemannian submanifolds of pseudo-Riemannian manifolds which includes the biharmonic equations for submanifolds of Riemannian manifolds as a special case. As applications, we proved that a pseudo-umbilical biharmonic pseudo-Riemannian submanifold of a pseudo-Riemannian manifold has constant mean curvature, we completed the classifications of biharmonic pseudo-Riemannian hypersurfaces with at most two distinct principal curvatures, which were used to give four construction methods to produce proper biharmonic pseudo-Riemannian submanifolds from minimal submanifolds. We also made some comparison study between biharmonic hypersurfaces of Riemannian space forms and the space-like biharmonic hypersurfaces of pseudo-Riemannian space forms.
{\footnotesize }
\end{quote}
\section{Biharmonic maps between \sdo manifolds}
All manifolds, maps, tensor fields are assumed to be smooth in this paper.\\
\indent {\bf 1.1 Some basic concepts and notations from \sdo \\geometry}\\
\indent Let $(M^m_t, g)$ be a pseudo-Riemannian (or semi-Riemannian) manifold of dimension $m$ with a nondegenerate metric with index $t$. Here, nondegeneracy means the only vector $X\in T_x M$ satisfying $g_x(X, Y)=0$ for all $Y\in T_xM$ and all $x\in M$ is $X=0$. We use $|X|=|g(X,X)|^{1/2}$ to denote the norm of a vector $X$ and call a vector a unit vector if its norm is $1$. A set of $m$ orthogonal unit vectors is called an orthonormal basis of $(M^m_t, g)$ at a point. A local orthonormal frame of $(M^m_t, g)$ is a set of $m$ local vector fields $\{e_i\}$ such that $g(e_i, e_j)=\epsilon_i\delta_{ij}$ with $\epsilon_1=\cdots =\epsilon_t=-1, \;\epsilon_{t+1}=\cdots =\epsilon_m=1$.\\
\indent For a local orthonormal frame $\{e_i\}$ on a neighborhood $U$ of $(M^m_t, g)$, we have
\begin{eqnarray}
X= \sum_{i=1}^m\epsilon_ig(X, e_i)e_i, \;\; {\rm for\; any}\; X\in \mathfrak{X}(U),\\
{\rm grad} f=\sum_{i=1}^m\epsilon_i (df(e_i))e_i=\sum_{i=1}^m\epsilon_i (e_i f)e_i
\end{eqnarray}
\begin{eqnarray}
{\rm div} X=\sum_{i=1}^m\epsilon_i g(\nabla_{e_i}X, e_i)\\
\Delta f=\sum_{i=1}^m\epsilon_i [e_i(e_i f)-(\nabla_{e_i}e_i) f]\\
{\rm The\; trace\; of\; a\; bilinear\; form\; b\; is\; given\; by}: {\rm Trace}_g b=\sum_{i=1}^m\epsilon_i b(e_i, e_i).
\end{eqnarray}
\indent In this paper, we will use the following notations and conventions.
\begin{itemize}
\item An $n$-dimensional pseudo-Euclidean space with index $s$ is denoted by \\$\r^n_s=(\r^n, \langle, \rangle)$ with $\langle x, y\rangle=-\sum_{i=1}^s x_iy_i+\sum_{k>s}^n x_ky_k$.
\item An $n$-dimensional pseudo-Riemannian sphere, denoted by $S^n_s(r) $, is defined to be $S^n_s(r)=\{ x\in \r^{n+1}_s: \langle x, x\rangle=r^2\}$, which, with the induced metric from $\r^{n+1}_s$, is a complete pseudo-Riemannian manifold with index $s$ and of constant sectional curvature $\frac{1}{r^2}$.
\item An $n$-dimensional pseudo-hyperbolic space, denoted by $H^n_s(r) $, is defined to be $H^n_s(r)=\{ x\in \r^{n+1}_{s+1}: \langle x, x\rangle=-r^2\}$, which, with the induced metric from $\r^{n+1}_{s+1}$, is a complete pseudo-Riemannian manifold with index $s$ and of constant sectional curvature $-\frac{1}{r^2}$.
\item A pseudo-Riemannian space form refers to one on the three spaces:\\ $\r^n_s$, $S^n_s(r)$, $H^n_s(r) $.
\item Sometimes, for the convenience of a unified treatment, we also use $N^n_s(C)$ to denote a pseudo-Riemannian space form of sectional curvature $C$, so $N^n_s(C)$ can be represented as one of the following model spaces:
\begin{equation}\label{SF}
N^n_s(C)=
\begin{cases} \r^n_s, \hskip1.5cm\;{\rm for}\; C=0,\\
S^n_s(1/\sqrt{C}),\;\;\;\; {\rm for}\; C>0,\; {\rm or}\\H^n_s(1/\sqrt{-C}),\; \;\;\;{\rm for}\; C<0.
\end{cases}
\end{equation}
\end{itemize}
Pseudo-Riemannian space forms have important applications in the theory of general relativity as it is well known that the pseudo-Riemannian space forms $\r^n_1$, $S^n_1(r)$, $H^n_1(r) $ are model spaces for Minkowski, de Sitter, and Anti-de Sitter space-times respectively.\\

{\bf 1.2 Harmonic and biharmonic maps between Riemannian manifolds}\\
\indent A map between Riemannian manifolds $\varphi:(M^m, g)\longrightarrow (N^n, h)$
is harmonic if its tension field 
\begin{equation}
\tau(\varphi)={\rm Trace}_{g}\nabla {\rm d} \varphi=\sum_{i=1}^m [\nabla^{\phi}_{e_1}\,d\phi (e_i)-d\phi(\nabla^M_{e_i}e_i)]
\end{equation}
vanishes identically.\\
\indent A {\em biharmonic map} between Riemannian manifolds is a map $\varphi:(M, g)\longrightarrow (N, h)$
which is a critical point of the
bienergy functional
\begin{equation}\nonumber
E_{2}\left(\varphi,\Omega \right)= \frac{1}{2} {\int}_{\Omega}
\left|\tau(\varphi) \right|^{2}{\rm d}x
\end{equation}
for every compact subset $\Omega$ of $M$. Using the first variational formula (
\cite{Ji2}) one sees that $\varphi$ is biharmonic if and only if
its bitension field vanishes identically, i.e.,
\begin{equation}\label{BI1}
\tau_{2}(\varphi):=-\triangle^{\varphi}(\tau(\varphi)) - {\rm
Trace}_{g} R^{N}({\rm d}\varphi, \tau(\varphi)){\rm d}\varphi
=0,
\end{equation}
where
\begin{equation}\notag
\triangle^{\varphi}=-{\rm Trace}_{g}(\nabla^{\varphi})^{2}= -{\rm
Trace}_{g}(\nabla^{\varphi}\nabla^{\varphi}-\nabla^{\varphi}_{\nabla^{M}})
\end{equation}
is the Laplacian on sections of the pull-back bundle $\varphi^{-1}
TN$ and $R^{N}$ is the Riemann curvature operator of $(N, h)$ defined by
$$R^{N}(X,Y)Z=
[\nabla^{N}_{X},\nabla^{N}_{Y}]Z-\nabla^{N}_{[X,Y]}Z.$$ 
\indent A submanifold of a Riemannian manifold is called a {\em biharmonic submanifold} if the isometric immersion that defines the submanifold is a biharmonic map. \\
\indent By definition, a harmonic map is always a biharmonic map. This, together with the well-known fact that a submanifold is minimal if and only if the isometric immersion that defines the submanifold is harmonic, implies that a minimal submanifold is always a biharmonic submanifold. So, it is a custom to call a biharmonic map, which is not harmonic, a {\em proper biharmonic map}, and {\em proper biharmonic submanifolds} are reserved for those biharmonic submanifolds, which are not minimal.\\

{\bf 1.3 Harmonic and biharmonic maps between pseudo-Riemannian manifolds}\\
\indent The generalization of the concepts of harmonic and biharmonic maps between Riemannian manifolds to the case of \sdo manifolds is straightforward. \\
\indent A map $\phi: (M^m_t, g)\longrightarrow (N^n_s, h)$ between pseudo-Riemannian manifolds is a harmonic map if its tension field vanishes identically, i.e.,
\begin{eqnarray}
\tau(\phi)={\rm Trace}_g\nabla d\phi=\sum_{i=1}^m \epsilon_i[\nabla^{\phi}_{e_1}\,d\phi (e_i)-d\phi(\nabla^M_{e_i}e_i)]\equiv 0,
\end{eqnarray}
where $\{e_i|\; i=1,2,\cdots m\}$ is a local orthonormal frame of $M^m_t$ with\\ $\langle e_i, e_k\rangle=\epsilon_i\delta_{ik},\; \epsilon_1=\cdots =\epsilon_t=-1, \;\epsilon_{t+1}=\cdots =\epsilon_m=1$.\\
\indent Similarly, we have
\begin{definition}
A map $\phi: (M^m_t, g)\longrightarrow (N^n_s, h)$ between pseudo-Riemannian manifolds is a {\em biharmonic map} if its bi-tension field vanishes identically, i.e.,
\begin{eqnarray}
\tau_2(\phi)=\sum_{i=1}^m\epsilon_i\left((\nabla^{\phi}_{e_i}\nabla^{\phi}_{e_i}-\nabla^{\phi}_{\nabla^{M}_{e_i}e_i})\tau(\phi)
- R^{N}({\rm d}\phi ({e_i}), \tau(\phi)){\rm d}\phi(e_i)\right)\equiv 0,
\end{eqnarray}
where $\{e_i|\; i=1,2,\cdots m\}$ is a local orthonormal frame of $M^m_t$ with\\ $\langle e_i, e_k\rangle=\epsilon_i\delta_{ik},\; \epsilon_1=\cdots =\epsilon_t=-1, \;\epsilon_{t+1}=\cdots =\epsilon_m=1$.\\
\end{definition}
\begin{remark}
Note that the only difference of the tension (and the bitension) fields between the Riemannian and the \sdo cases lies in the definition of the trace of a bilinear form in these two different cases.
\end{remark}
\begin{definition}
A \sdo submanifold $M^m_t$ of a \sdo \\manifold $(N^n_s, h)$ is said to be biharmonic if the isometric immersion $M^m_t\longrightarrow (N^n_s,h)$ is a biharmonic map. Proper biharmonic maps and proper biharmonic submanifolds in the \sdo case are similarly defined as in the Riemannian case.
\end{definition}
{\bf 1.4 Some recent work on biharmonic pseudo-Riemannian submanifolds}\\
\indent The study of biharmonic submanifolds was initiated in the middle of 1980s by B. Y. Chen \cite{Ch1} in his program to understand finite types submanifolds, and independently by G. Y. Jiang \cite{Ji1}, \cite{Ji2}, \cite{Ji3} in his effort to study the geometry of $k$-polyharmonic maps proposed by Eells and Lemaire in \cite{EL}. Since 2001,  biharmonic submanifolds of Riemannian space forms have been vigorously studied by many geometers around the world with the main focus to classify and/or characterize biharmonic submanifolds in a Riemannian space form. We refer the readers to the  recent survey \cite{Ou2} of the second named author of the paper for some history, fundamental problems, current results and open problems with updated references about the submanifolds of the Riemannian space forms.\\
\indent Comparatively, not much study on biharmonic submanifolds in pseudo-Riemannian manifolds has been made except a few works on some classifications of biharmonic submanifolds which can be summarized as follows. \\
\indent Chen-Ishikawa \cite{CI} studied biharmonic pseudo-Riemannian submanifolds in pseudo-Euclidean space $\r^4_s$ for $s=1, 2, 3$. They classified all space-like biharmonic curves and proved that any \sdo biharmonic surface in Minkowski space $\r^3_1$ is minimal, i.e., $H=0$. They also obtained some classification results on pseudo-Riemannian biharmonic surfaces $x : M^2_1\longrightarrow \r^4_s$ with $s=1, 2,3$ under the assumption of nonzero constant mean curvature or mean curvature vector being light-like. Ouyang \cite{Oy} studied space-like biharmonic submanifolds of pseudo-Riemannian space forms. Among other things, he derived the equations, in local coordinates, for space-like biharmonic submanifolds in a \sdo manifold, and applied it to study space-like biharmonic submanifolds of \sdo space forms with parallel mean curvature vector fields. He also proved that a space-like biharmonic hypersurface with constant mean curvature in $\r^n_1$, $S^n_1(C)$ has to be minimal, i.e., $H=0$, and that a space-like biharmonic hypersurface with constant mean curvature and $|A|^2\ne \frac{n}{C^2}$ in $H^n_1(C)$ has to be minimal, i.e., $H=0$. Defever-Kaimakamis-Papantoniou \cite{DKP} proved that any biharmonic \sdo hypersurface in $4$-dimensional pseudo-Euclidean space with diagonalizable second fundamental form has to be minimal. Later in 2008, Chen \cite{Ch2} obtained a complete classification of biharmonic Lorentzian flat surfaces in pseudo-Euclidean space $\r^4_2$. Zhang in \cite{Zh} shown that the only space-like proper biharmonic surface in a 3-dimensional pseudo-Riemannian space form $N^3_1(C)$ is (a part) of $H^2(\frac{1}{\sqrt{2}})$; He also proved that both $H^n(\frac{1}{\sqrt{2}})$ and $H^p(\frac{1}{\sqrt{2}})\times H^q(\frac{1}{\sqrt{2}})$ with $p\ne q, \; p+q=n$ are space-like proper biharmonic hypersurfaces in the Anti-de Sitter space $H^{n+1}_1(1)$. Sasahara in \cite{Sa} studied pseudo-Riemannian submanifolds in $3$-dimensional Lorentzian space forms of nonzero constant curvature $ N^3_1(C),\;\; C\ne 0$. He gave complete classifications of proper biharmonic curves $\gamma: I\longrightarrow S^3_1(1)$ and $\gamma: I\longrightarrow H^3_1(1)$. He also proved that a pseudo-Riemannian Surface $x: M^2_t\longrightarrow N^3_1$ is proper biharmonic if and only if it is a part of the following: $S^2_1(\frac{1}{\sqrt{2}})\subseteq S^3_1(1)$, a B-scroll over a curve and of $K_M=2$ in $S^3_1(1)$, or $H^2 (\frac{1}{\sqrt{2}})\subseteq H^3_1(1)$. Fu \cite{Fu} classified proper biharmonic pseudo-Riemannian submanifold $x : M^m\longrightarrow \r^n_s\; (n\ge 4, s\ge 1)$ with parallel mean curvature vector field $H$ and diagonalizable $A_{H}$ to be marginally trapped. Most recently, Liu-Du \cite{LD} classified proper biharmonic pseudo-Riemannian hypersurfaces with at most two distinct principal curvatures in a \sdo Riemannian space form. Finally, Liu-Du-Zhang in \cite{LDZ} gave a classification of space-like biharmonic submanifolds of a \sdo space form, which are pseudo-umbilical or have parallel mean curvature vector fields.\\

 In this paper, we derived biharmonic equations for pseudo-Riemannian submanifolds of pseudo-Riemannian manifolds (Theorem \ref{MT1}) which includes the biharmonic equations for submanifolds of Riemannian manifolds obtained in \cite{BMO2} and biharmonic equations for \sdo submanifolds of \sdo space forms stated in \cite{LD} as  special cases. As applications, we proved that a pseudo-umbilical biharmonic pseudo-Riemannian submanifold of a pseudo-Riemannian manifold has constant mean curvature (Theorem \ref{MT2}) generalizing the corresponding results of the Riemannian cases obtained in \cite {Di}, \cite{CMO2} and \cite{BMO2}, we gave a complete classification of biharmonic pseudo-Riemannian hypersurfaces of \sdo space forms with at most two distinct principal curvatures (Theorem \ref{MT3} and Corollary \ref{SSn}) , which improve the classifications given in \cite{LD} and include the classification obtained by Balmus-Montaldo-Oniciuc in \cite{BMO1} as special cases. The complete classifications (Corollary \ref{SSn}) were used to give four construction methods to produce proper biharmonic pseudo-Riemannian submanifolds from minimal submanifolds (Theorems \ref{GZ}, \ref{GZ1} and \ref{GZ2}). We also made some comparison study between biharmonic hypersurfaces of Riemannian space forms and the space-like biharmonic hypersurfaces of pseudo-Riemannian space forms.

\section{Biharmonic pseudo-Riemannian submanifolds of pseudo-Riemannian manifolds}
\begin{theorem}\label{MT1}
A pseudo-Riemannian submanifold $\phi: (M^m_t, g)\hookrightarrow (N^n_s, h)$ of a pseudo-Riemannian manifold is biharmonic if and only if its mean curvature vector field $H$, the second fundamental form $B$ and the Weingarten operator $A_H$ solve the following system of PDEs
\begin{equation}\label{BSM}
\begin{cases}
\Delta^{\perp} H+{\rm Trace}\, B(A_H(\cdot), \cdot)+[{\rm Trace}\,{\rm R}^N(d \phi (\cdot), H)d\phi (\cdot)]^\perp =0,\\
{\rm Trace} \,A_{\nabla^{\perp}_{(\cdot)}H}\,(\cdot) +\frac{m}{4} \,{\rm grad}\, \langle H, H\rangle
+[{\rm Trace}\,{\rm R}^N(d \phi (\cdot), H)d\phi (\cdot)]^\top=0,
\end{cases}
\end{equation}
where 
\begin{eqnarray}
\Delta^{\perp} H&=& -\sum_{i=1}^m \epsilon_i (\nabla^{\perp}_{e_1} \nabla^{\perp}_{e_1}-\nabla^{\perp}_{\nabla^M_{e_i}e_1})H,\\
{\rm Trace}\,B(A_H(\cdot), \cdot) &=& \sum_{i=1}^m\,\epsilon_i B(A_H(e_i), e_i),\\
{\rm Trace} \,A_{\nabla^{\perp}_{(\cdot)}H}\,(\cdot) &=& \sum_{i=1}^m\,\epsilon_i A_{\nabla^{\perp}_{e_i}H}\,(e_i)
\end{eqnarray}
for a local orthonormal frame $\{e_i|\; i=1,2,\cdots m\}$ of $M^m_t$ with $\langle e_i, e_k\rangle=\epsilon_i\delta_{ik}$ and $\epsilon_1=\cdots =\epsilon_t=-1, \;\epsilon_{t+1}=\cdots =\epsilon_m=1$.
\end{theorem}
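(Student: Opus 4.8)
The plan is to reduce the vanishing of the bitension field to a single equation for the mean curvature vector and then split it into its tangential and normal components. For an isometric immersion one has $d\phi(e_i)=e_i$ and $\tau(\phi)=\sum_i\epsilon_iB(e_i,e_i)=mH$, where $H$ is the mean curvature vector field. Since $\sum_i\epsilon_i(\nabla^{\phi}_{e_i}\nabla^{\phi}_{e_i}-\nabla^{\phi}_{\nabla^M_{e_i}e_i})=-\Delta^{\phi}$ by definition of the rough Laplacian, the biharmonicity condition $\tau_2(\phi)=0$ is equivalent, after dividing by $-m$, to
\[
\Delta^{\phi}H+{\rm Trace}\,R^N(d\phi(\cdot),H)d\phi(\cdot)=0 .
\]
I would then project this onto $TM$ and $T^{\perp}M$; the two components are precisely the two PDEs of \eqref{BSM}. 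The only machinery needed is the Gauss formula $\nabla^{\phi}_XY=\nabla^M_XY+B(X,Y)$ and the Weingarten formula $\nabla^{\phi}_X\xi=-A_\xi X+\nabla^{\perp}_X\xi$, which carry over unchanged to the pseudo-Riemannian setting once one notes that nondegeneracy of $M^m_t$ makes $TN|_M=TM\oplus T^{\perp}M$ an orthogonal splitting.

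Next I would compute $\Delta^{\phi}H$ explicitly. Starting from $\nabla^{\phi}_{e_i}H=-A_He_i+\nabla^{\perp}_{e_i}H$ and differentiating once more --- applying Gauss to the tangential part $A_He_i$ and Weingarten to the normal part $\nabla^{\perp}_{e_i}H$ --- one collects the result into normal and tangential pieces. The normal piece is
\[
[\Delta^{\phi}H]^{\perp}=\Delta^{\perp}H+{\rm Trace}\,B(A_H(\cdot),\cdot),
\]
so adding the normal part of the curvature term gives the first equation of \eqref{BSM} at once. The tangential piece assembles into
\[
[\Delta^{\phi}H]^{\top}=\sum_i\epsilon_i(\nabla_{e_i}A)_H(e_i)+2\,{\rm Trace}\,A_{\nabla^{\perp}_{(\cdot)}H}(\cdot),
\]
the factor $2$ appearing because one copy of $A_{\nabla^{\perp}_{e_i}H}e_i$ is produced by the Weingarten step while a second is released when $\nabla^M_{e_i}(A_He_i)$ is reorganized into the covariant derivative $(\nabla_{e_i}A)_H(e_i)$ of the shape operator.

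The crux is to rewrite $\sum_i\epsilon_i(\nabla_{e_i}A)_H(e_i)$. Differentiating the identity $\langle A_\xi Y,Z\rangle=\langle B(Y,Z),\xi\rangle$ gives $\langle(\nabla_XA)_\xi Y,Z\rangle=\langle(\nabla_XB)(Y,Z),\xi\rangle$, so for tangent $Z$,
\[
\Big\langle\textstyle\sum_i\epsilon_i(\nabla_{e_i}A)_H(e_i),Z\Big\rangle=\sum_i\epsilon_i\langle(\nabla_{e_i}B)(e_i,Z),H\rangle .
\]
I would then invoke the Codazzi equation in the form $(\nabla_{e_i}B)(e_i,Z)=(\nabla_ZB)(e_i,e_i)+(R^N(e_i,Z)e_i)^{\perp}$, use that the trace commutes with the covariant derivative to get $\sum_i\epsilon_i(\nabla_ZB)(e_i,e_i)=m\nabla^{\perp}_ZH$ and hence $m\langle\nabla^{\perp}_ZH,H\rangle=\tfrac m2\,Z\langle H,H\rangle$, and apply the pair symmetry $\langle R^N(e_i,Z)e_i,H\rangle=\langle R^N(e_i,H)e_i,Z\rangle$. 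This yields
\[
\sum_i\epsilon_i(\nabla_{e_i}A)_H(e_i)=\tfrac m2\,{\rm grad}\langle H,H\rangle+[{\rm Trace}\,R^N(d\phi(\cdot),H)d\phi(\cdot)]^{\top}.
\]
Substituting into $[\Delta^{\phi}H]^{\top}+[{\rm Trace}\,R^N]^{\top}=0$ and dividing the resulting identity by $2$ gives the second equation of \eqref{BSM}. I expect the sole obstacle to be bookkeeping: pinning down the sign in the Codazzi relation --- it is exactly this sign that makes the curvature term survive in the tangential equation instead of cancelling --- and carrying the index weights $\epsilon_i$ consistently through every trace. The remaining manipulations are routine Gauss--Weingarten calculus.
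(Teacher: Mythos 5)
Your proposal is correct and follows essentially the same route as the paper's proof: expand the bitension field of the immersion via the Gauss and Weingarten formulas, convert the $\sum_i\epsilon_i(\nabla_{e_i}A)_H(e_i)$ term using the duality $\langle(\nabla_XA)_\xi Y,Z\rangle=\langle(\nabla_XB)(Y,Z),\xi\rangle$ together with the Codazzi equation, the trace identity $\sum_i\epsilon_i(\nabla_ZB)(e_i,e_i)=m\nabla^{\perp}_ZH$, and the pair symmetry of $R^N$, then split into normal and tangential components. The only (cosmetic) differences are that you differentiate through $H$, so the factor $2$ in front of ${\rm Trace}\,A_{\nabla^{\perp}_{(\cdot)}H}(\cdot)$ appears at the outset rather than at the final substitution, and you invoke tensoriality of the trace where the paper uses normal coordinates at a point.
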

\begin{proof}
It is well known that the tension field of the isometric immersion $\phi: (M^m_t, g) \hookrightarrow (N^n_s, h)$ is given by $\tau(\phi)=mH$, where $H$ is the mean curvature vector of the pseudo-Riemannian submanifold. It follows that the \sdo submanifold is biharmonic if and only if
\begin{equation}\label{B1}
\sum_{i=1}^m\epsilon_i\,\{(\nabla^N_{e_i}\nabla^N_{e_i}-\nabla^N_{\nabla^M_{e_i}e_i})H-R^N(d\phi(e_i), H)d\phi(e_i)\}=0.
\end{equation}
\indent A straightforward computation using Formulas of Gauss and Weingarten for submanifolds yields
\begin{eqnarray}\notag
\sum_{i=1}^m \,\nabla^N_{e_i}\nabla^N_{e_i}\,H &=& \sum_{i=1}^m \,\nabla^N_{e_i}[-A_H(e_i)+\nabla^{\perp}_{e_i}H]\\\notag
&=& \sum_{i=1}^m \,\nabla^N_{e_i}[-A_H(e_i)+\nabla^{\perp}_{e_i}H]\\\label{B2}
&=& \sum_{i=1}^m [-\nabla^M_{e_i}A_H(e_i)-B(A_H(e_i), e_i)-A_{\nabla^{\perp}_{e_i}H}e_i+\nabla^{\perp}_{e_i}\nabla^{\perp}_{e_i}H],\\\label{B3}
\sum_{i=1}^m \,\nabla^N_{\nabla^M_{e_i}e_i}\,H &=& -\sum_{i=1}^m \, A_H(\nabla^M_{e_i}e_i)+\sum_{i=1}^m \,\nabla^{\perp}_{\nabla^M_{e_i}e_i}H.
\end{eqnarray}
\indent Recall that 
\begin{eqnarray}
(\nabla_XA_H)(Y) &=& \nabla_X(A_H(Y))-A_H(\nabla_XY),\\
(\nabla^N_XB)(Y,Z)&=&\nabla^{\perp}_XB(Y,Z)-B(\nabla_X Y, Z)-B(Y, \nabla_X Z),
\end{eqnarray}
 and we have the Codazzi-Mainardi equation
\begin{equation}
(\nabla^N_XB)(Y,Z)=(\nabla^N_YB)(X,Z)+({\rm R}^N(X,Y)Z)^{\perp}
\end{equation}
for a submanifold.\\
\indent Now we compute
\begin{eqnarray}\notag
&&\langle (\nabla^N_XB)(Y,Z), H\rangle \\\notag
&=&\langle \nabla^{\perp}_XB(Y,Z), H\rangle-\langle B(\nabla_X Y, Z), H\rangle-\langle B(Y, \nabla_X Z), H\rangle\\\notag
&=& X \langle B(Y,Z), H\rangle-\langle B(Y,Z), \nabla^{\perp}_X H\rangle-\langle B(\nabla_X Y, Z), H\rangle-\langle B(Y, \nabla_X Z), H\rangle\\\notag
&=& X \langle A_H(Y), Z\rangle-\langle A_{\nabla^{\perp}_XH}Y, Z\rangle-\langle A_H(\nabla_X Y), Z\rangle-\langle A_H(Y), \nabla_X Z \rangle\\\notag
&=& \langle \nabla_X(A_H(Y)), Z\rangle-\langle A_{\nabla^{\perp}_XH}Y, Z\rangle-\langle A_H(\nabla_X Y), Z\rangle\\\notag
&=& \langle (\nabla_X A_H) (Y), Z\rangle-\langle A_{\nabla^{\perp}_XH}Y, Z\rangle.
\end{eqnarray}
Using this, we have
\begin{eqnarray}\notag
\langle (\nabla^M_{e_i}A_H)(e_i), e_j\rangle &=&
\langle (\nabla^N_{e_i}B)(e_i, e_j), H\rangle+\langle A_{\nabla^{\perp}_{e_i}H}e_i, e_j\rangle\\\notag
&=&\langle (\nabla^N_{e_j}B)(e_i, e_i) +({\rm R}^N(e_i,e_j)e_i)^{\perp}, H\rangle+\langle A_{\nabla^{\perp}_{e_i}H}e_i, e_j\rangle\\\notag
&=&\langle \nabla^{\perp}_{e_j}B (e_i, e_i)-2B(e_i, \nabla^M_{e_j}e_i), H\rangle+\langle {\rm R}^N(d\phi(e_i),H)d\phi(e_i), e_j\rangle\\
&&+\langle A_{\nabla^{\perp}_{e_i}H}e_i, e_j\rangle.\\\notag
\end{eqnarray}
By choosing a local orthonormal frame with the property that at a point $x\in M_t^m$ we have $\nabla^M_{e_j}e_i=0$ for all $i, j=1, 2, \cdots, m$, then we have
\begin{eqnarray}\notag
\sum_{i=1}^m\langle (\nabla^M_{e_i}A_H)(e_i), e_j\rangle_x
&=&m \langle \nabla^{\perp}_{e_j} H, H\rangle_x+ \sum_{i=1}^m \{\langle {\rm R}^N(d\phi(e_i),H)d\phi(e_i), e_j\rangle+\langle A_{\nabla^{\perp}_{e_i}H}e_i, e_j\rangle\}_x\\
&=&\left(\frac{m}{2} e_j \langle H, H\rangle+ \sum_{i=1}^m \{\langle {\rm R}^N(d\phi(e_i),H)d\phi(e_i), e_j\rangle+\langle A_{\nabla^{\perp}_{e_i}H}e_i, e_j\rangle\}\right)_x.
\end{eqnarray}
As this is true for any point $x\in M^m_t$ we obtain
\begin{eqnarray}\notag
\sum_{i=1}^m\langle (\nabla^M_{e_i}A_H)(e_i), e_j\rangle=\frac{m}{2} e_j \langle H, H\rangle + \sum_{i=1}^m \langle {\rm R}^N(d\phi(e_i),H)d\phi(e_i), e_j\rangle+\sum_{i=1}^m\langle A_{\nabla^{\perp}_{e_i}H}e_i, e_j\rangle.
\end{eqnarray}
It follows that
\begin{eqnarray}\notag
\sum_{i=1}^m\epsilon_i(\nabla^M_{e_i}A_H)(e_i) &=& \sum_{j=1}^m\epsilon_j \langle \sum_{i=1}^m\langle \epsilon_i (\nabla^M_{e_i}A_H)(e_i), e_j\rangle\,e_j \\\notag
&=&\frac{m}{2} \sum_{j=1}^m\epsilon_j e_j (\langle H, H\rangle)e_j+ \sum_{i,j=1}^m \epsilon_j\epsilon_i\langle A_{\nabla^{\perp}_{e_i}H}e_i, e_j\rangle e_j\\\notag &&+\sum_{i,j=1}^m \epsilon_j \epsilon_i\langle {\rm R}^N(d\phi(e_i),H)d\phi(e_i), e_j\rangle e_j\\\label{B4}
&=&\frac{m}{2} {\rm grad}\langle H, H\rangle+ \sum_{i=1}^m\epsilon_i A_{\nabla^{\perp}_{e_i}H}e_i +[{\rm Trace}\,{\rm R}^N(d \phi (\cdot), H)d\phi (\cdot)]^\top.
\end{eqnarray}
\indent Note that the Riemann curvature decomposes into its normal and the tangential parts, 
\begin{eqnarray}\notag
&&\sum_{i=1}^m \;\epsilon_i R^N(d\phi(e_i), H)d\phi(e_i) =\\\label{B5} && [{\rm Trace}\,{\rm R}^N(d \phi (\cdot), H)d\phi]^\perp 
+[{\rm Trace}\,{\rm R}^N(d \phi (\cdot), H)d\phi]^\top.
\end{eqnarray}
\indent Substituting (\ref{B2}), (\ref{B3}), (\ref{B4}), and (\ref{B5}) into the second equation of (\ref{B1}) and comparing the normal and the tangential components we obtain the theorem.
\end{proof}
\begin{remark}
One can check that the biharmonic equation for \sdo \\submanifolds of \sdo space forms given in Theorem \ref{MT1} generalizes the biharmonic equation for Riemannian submanifolds of Riemannian manifolds obtained by Chen \cite{Ch1}, \cite{Ch2}, \cite{Ch3}, \cite{CI}, Caddeo-Montaldo-Oniciuc \cite{CMO2} and Balmus-Montaldo-Oniciuc \cite{BMO2}, it also includes the biharmonic equation for \sdo \\submanifolds of pseudo-Euclidean spaces studied by Chen in \cite{Ch1}, \cite{Ch2}, \cite{Ch3}, \cite{CI}, the biharmonic equation for space-like submanifolds of \sdo space form obtained by Ouyang in \cite{Oy}, and the biharmonic equations for \sdo submanifolds of a \sdo space form stated in \cite{LD} as special cases.
\end{remark}
As an immediate consequence, we have the following corollary, which was stated in \cite{LD} without a proof.
\begin{corollary}
A pseudo-Riemannian submanifold $\phi: (M^m_t, g)\hookrightarrow (N^n_s(C), h)$ of a pseudo-Riemannian space form of sectional curvature $C$ is biharmonic if and only if its mean curvature vector field $H$, the second fundamental form $B$ and the Weingarten operator $A_H$ solve the following system of PDEs
\begin{equation}\label{BSMC}
\begin{cases}
\Delta^{\perp} H+{\rm Trace}\, B(A_H(\cdot), \cdot)-mC\,H =0,\\
{\rm Trace} \,A_{\nabla^{\perp}_{(\cdot)}H}\,(\cdot) +\frac{m}{4} \,{\rm grad}\, \langle H, H\rangle
=0.
\end{cases}
\end{equation}
\end{corollary}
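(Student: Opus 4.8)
The plan is to obtain the corollary directly by specializing the general biharmonic system (\ref{BSM}) of Theorem \ref{MT1} to an ambient space of constant sectional curvature. Comparing (\ref{BSM}) with the target system (\ref{BSMC}), every term already agrees except for the two curvature contributions $[{\rm Trace}\,{\rm R}^N(d \phi (\cdot), H)d\phi (\cdot)]^\perp$ and $[{\rm Trace}\,{\rm R}^N(d \phi (\cdot), H)d\phi (\cdot)]^\top$. Thus the whole problem reduces to evaluating the single vector $\sum_{i=1}^m \epsilon_i R^N(d\phi(e_i), H)d\phi(e_i)$ on $N^n_s(C)$ and then reading off its normal and tangential parts.

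First I would invoke the standard algebraic form of the curvature operator on a pseudo-Riemannian space form of constant sectional curvature $C$, namely
$$R^N(X, Y)Z = C\left(\langle Y, Z\rangle X - \langle X, Z\rangle Y\right);$$
this identity holds in the pseudo-Riemannian setting just as in the Riemannian one, being forced by constancy of the sectional curvature together with the symmetries of $R^N$. Identifying $d\phi(e_i)$ with the tangent vector $e_i$ along the isometric immersion and substituting, I would use orthonormality in the form $\langle e_i, e_i\rangle = \epsilon_i$ together with the fact that $H$ is a normal field, so that $\langle H, e_i\rangle = 0$ for every tangent $e_i$. The trace then collapses to
$$\sum_{i=1}^m \epsilon_i R^N(e_i, H)e_i = C\sum_{i=1}^m \epsilon_i\left(\langle H, e_i\rangle e_i - \langle e_i, e_i\rangle H\right) = -C\sum_{i=1}^m \epsilon_i^2\, H = -mC\,H,$$
using $\epsilon_i^2 = 1$.

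Since the resulting vector $-mC\,H$ is purely normal, its tangential component vanishes while its normal component equals $-mC\,H$. Substituting $[{\rm Trace}\,{\rm R}^N(d \phi (\cdot), H)d\phi (\cdot)]^\perp = -mC\,H$ and $[{\rm Trace}\,{\rm R}^N(d \phi (\cdot), H)d\phi (\cdot)]^\top = 0$ into (\ref{BSM}) then yields precisely the two equations of (\ref{BSMC}). There is no genuine obstacle here, as the statement is an immediate consequence of Theorem \ref{MT1}; the only point requiring care is the bookkeeping of the index signs, specifically recognizing that in the pseudo-Riemannian trace one has $\langle e_i, e_i\rangle = \epsilon_i$ rather than $1$, while simultaneously $\epsilon_i\cdot\epsilon_i = 1$, so that the two sign factors cancel and produce the clean count $m$ in place of the signature-dependent sum $\sum_i \epsilon_i$.
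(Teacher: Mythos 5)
Your proposal is correct and follows essentially the same route as the paper: both specialize Theorem \ref{MT1} by evaluating $\sum_i \epsilon_i R^N(d\phi(e_i),H)d\phi(e_i)$ via the constant-curvature formula $R^N(X,Y)Z=C(\langle Y,Z\rangle X-\langle X,Z\rangle Y)$, using $\langle H, e_i\rangle=0$ and $\langle e_i,e_i\rangle=\epsilon_i$ to collapse the trace to $-mC\,H$. Your write-up is in fact slightly more careful than the paper's, since you explicitly note that $-mC\,H$ is purely normal, so the tangential curvature term in the second equation of (\ref{BSM}) drops out.
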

\begin{proof}
Note that when the ambient space is a \sdo space form of constant sectional curvature $C$, we have
\begin{eqnarray}
\sum_{i=1}^m \;\epsilon_i\,R^N(d\phi(e_i), H)d\phi(e_i) &=&-\sum_{i=1}^m\,\epsilon_i\, C\langle \phi(e_i), \phi(e_i)\rangle\,H\\
&=&-C\sum_{i=1}^m \;\epsilon_i^2=-mCH.
\end{eqnarray}
This, together with Theorem \ref{MT1}, gives the corollary.
\end{proof}

For the case of codimension one, we have the following corollary, which gives the biharmonic equation of a generic \sdo hypersurfaces of a generic \sdo manifold generalizing the biharmonic hypersurface equation in a generic Riemannian manifold obtained in \cite{Ou1}.
\begin{corollary}\label{CD1}
Let $\phi:(M^{m}_t, g)\longrightarrow (N^{m+1}_s, h)$ be a \sdo hypersurface in a \sdo manifold with mean curvature vector field $H=f \xi$, where $\xi$ is a unit normal vector field with $\langle \xi, \xi\rangle=\epsilon_{m+1}=\pm 1$ . Then
$\phi$ is biharmonic if and only if
\begin{equation}
\begin{cases}
\Delta f-\epsilon_{m+1}f (|A_\xi|^{2}-{\rm
Ric}^N(\xi,\xi))=0,\\
A_\xi\,({\rm grad}\,f )+\epsilon_{m+1}\frac{m}{2} f {\rm grad}\, f
-\, f \,({\rm Ric}^N\,(\xi))^{\top}=0,
\end{cases}
\end{equation}
where ${\rm Ric}^N : T_xM\longrightarrow T_xM$ denotes the Ricci operator of
the ambient space defined by $\langle {\rm Ric}\, (Z), W\rangle={\rm
Ric}^N (Z, W)$.
\end{corollary}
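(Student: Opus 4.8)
The plan is to obtain the corollary by specializing the general system (\ref{BSM}) of Theorem \ref{MT1} to codimension one and rewriting each term through the scalar mean curvature $f$, the shape operator $A_\xi$, and the Ricci curvature of $N$ in the normal direction. The key preliminary observation is that, since the normal bundle has rank one and $\langle \xi,\xi\rangle=\epsilon_{m+1}$ is constant, one has $\nabla^{\perp}_X\xi=0$ for every tangent vector $X$: indeed $\langle \nabla^{\perp}_X\xi,\xi\rangle=\frac{1}{2}X\langle \xi,\xi\rangle=0$, and a normal vector orthogonal to $\xi$ in a nondegenerate line bundle must vanish. From this I would record the three simplifications that drive the whole computation: $A_H=fA_\xi$, $\nabla^{\perp}_{e_i}H=(e_if)\xi$, and $B(X,Y)=\epsilon_{m+1}\langle A_\xi X,Y\rangle\,\xi$ (the last using $1/\langle \xi,\xi\rangle=\epsilon_{m+1}$).

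First I would treat the normal equation, the first line of (\ref{BSM}). Iterating $\nabla^{\perp}_{e_i}(f\xi)=(e_if)\xi$ gives $\Delta^{\perp}H=-(\Delta f)\,\xi$ with $\Delta f$ the Laplacian introduced in Section 1.1. The term ${\rm Trace}\,B(A_H(\cdot),\cdot)$ collapses to $\epsilon_{m+1}f\sum_i\epsilon_i\langle A_\xi^2 e_i,e_i\rangle\,\xi=\epsilon_{m+1}f\,|A_\xi|^2\,\xi$, where the identity ${\rm Trace}(A_\xi^2)=|A_\xi|^2$ uses the self-adjointness of $A_\xi$. For the curvature term I would extend $\{e_i\}$ to the orthonormal frame $\{e_1,\dots,e_m,\xi\}$ of $N$ along $M$ and take the $\xi$-component of $\sum_i\epsilon_i R^N(e_i,\xi)e_i$; the skew-symmetry $\langle R^N(e_i,\xi)e_i,\xi\rangle=-\langle R^N(e_i,\xi)\xi,e_i\rangle$ together with the frame expression for ${\rm Ric}^N$ then identifies $\sum_i\epsilon_i\langle R^N(e_i,\xi)e_i,\xi\rangle=-{\rm Ric}^N(\xi,\xi)$. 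Collecting $\xi$-components and clearing the overall sign yields precisely $\Delta f-\epsilon_{m+1}f(|A_\xi|^2-{\rm Ric}^N(\xi,\xi))=0$.

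For the tangential equation the same three substitutions give ${\rm Trace}\,A_{\nabla^{\perp}_{(\cdot)}H}(\cdot)=\sum_i\epsilon_i(e_if)A_\xi(e_i)=A_\xi({\rm grad}\,f)$ by the gradient formula, and $\langle H,H\rangle=\epsilon_{m+1}f^2$ turns $\frac{m}{4}{\rm grad}\langle H,H\rangle$ into $\frac{m}{2}\epsilon_{m+1}f\,{\rm grad}\,f$. The remaining piece is the tangential projection of the curvature term, and this is where the bookkeeping must be handled carefully: using $\langle R^N(e_i,\xi)e_i,e_j\rangle=-\langle R^N(e_i,\xi)e_j,e_i\rangle$ and summing against $\epsilon_i$ produces $-{\rm Ric}^N(\xi,e_j)$, so that re-expanding in the frame gives $[{\rm Trace}\,R^N(d\phi(\cdot),H)d\phi(\cdot)]^{\top}=-f\,({\rm Ric}^N(\xi))^{\top}$. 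Adding the three tangential contributions reproduces the second displayed equation. I expect the only real obstacle to be sign management: one must carry the index signs $\epsilon_i$ and the normal sign $\epsilon_{m+1}$ consistently through the curvature symmetries so that the normal component lands on ${\rm Ric}^N(\xi,\xi)$ and the tangential component on $({\rm Ric}^N(\xi))^{\top}$ with exactly the signs stated.
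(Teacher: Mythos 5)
Your proposal is correct and follows essentially the same route as the paper: specializing the general system (\ref{BSM}) of Theorem \ref{MT1} to codimension one by computing the five terms $\Delta^{\perp}H=-(\Delta f)\xi$, ${\rm Trace}\,B(A_H(\cdot),\cdot)=\epsilon_{m+1}f|A_\xi|^2\xi$, the normal and tangential projections of the curvature trace in terms of ${\rm Ric}^N(\xi,\xi)$ and $({\rm Ric}^N(\xi))^{\top}$, and ${\rm Trace}\,A_{\nabla^{\perp}_{(\cdot)}H}(\cdot)=A_\xi({\rm grad}\,f)$, then substituting. The only difference is that you justify these identities (via $\nabla^{\perp}_X\xi=0$ and the curvature symmetries) while the paper simply states them, so your write-up is if anything more complete.
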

\begin{proof}
Note that for a \sdo hypersurface $\phi:(M^{m}_t, g)\longrightarrow (N^{m+1}_s, h)$ with mean curvature vector $H=f\xi$, we have
\begin{eqnarray}\label{H1}
\Delta^{\perp} H&=&-(\Delta f)\xi,\\\label{H2}
{\rm Trace}\,B(A_H(\cdot), \cdot) &=& \sum_{i=1}^m\,\epsilon_i B(A_H(e_i), e_i)=\epsilon_{m+1} f |A_\xi|^2\xi,\\\label{H3}
({\rm Trace}\,{\rm R}^N(d \phi (\cdot), H)d\phi)^\perp &=&-[\epsilon_{m+1} f\,{\rm Ric}^N(\xi,\xi)]\xi\\\label{H4}
({\rm Trace}\,{\rm R}^N(d \phi (\cdot), H)d\phi)^\top &=& -f ({\rm Ric}^N\,(\xi))^{\top}\\\label{H5}
{\rm Trace} \,A_{\nabla^{\perp}_{(\cdot)}H}\,(\cdot) &=& \sum_{i=1}^m\,\epsilon_i A_{\nabla^{\perp}_{e_i}H}\,(e_i)=A_\xi({\rm grad} f).
\end{eqnarray}
Substituting (\ref{H1}), (\ref{H2}), (\ref{H3}), (\ref{H4}), and (\ref{H5}) into biharmonic \sdo submanifold equation (\ref{BSM}) we obtain the corollary.
\end{proof}
When the ambient space is a \sdo space form, Corollary \ref{CD1} reduces to the following corollary, which was obtained in \cite{LD}.
\begin{corollary}\label{CD1C}
A \sdo hypersurface $\phi:(M^{m}_t, g)\longrightarrow (N^{m+1}_s(C), h)$ of a \sdo space form with mean curvature vector $H=f \xi$, where $\xi$ is a unit normal vector field with $\langle \xi, \xi\rangle=\epsilon_{m+1}=\pm 1$, is biharmonic if and only if
\begin{equation}
\begin{cases}
\Delta f-f (\epsilon_{m+1}\,|A_\xi|^{2}-cm)=0,\\
A_\xi\,({\rm grad}\,f )+\epsilon_{m+1} \frac{m}{2} f {\rm grad}\, f
=0.
\end{cases}
\end{equation}
\end{corollary}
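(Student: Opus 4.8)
The plan is to obtain Corollary \ref{CD1C} as a direct specialization of Corollary \ref{CD1} to the case where the ambient manifold $N^{m+1}_s(C)$ has constant sectional curvature $C$. The only ingredients needed beyond Corollary \ref{CD1} are two elementary facts about the Ricci curvature of a space form, after which the result follows by substitution and simplification.

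First I would recall that for an $(m+1)$-dimensional pseudo-Riemannian space form of constant sectional curvature $C$, the Ricci tensor is a constant multiple of the metric,
\[
{\rm Ric}^N(X,Y)=mC\,\langle X,Y\rangle,
\]
so that the associated Ricci operator satisfies ${\rm Ric}^N(Z)=mC\,Z$ for every tangent vector $Z$ of $N$. I would note that the coefficient here is $m=(m+1)-1$, i.e. the dimension of $N$ minus one, rather than $m+1$; keeping track of this dimensional factor is the one point that calls for minor care.

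Next I would evaluate on the unit normal $\xi$ the two curvature quantities appearing in Corollary \ref{CD1}. From the formula above, ${\rm Ric}^N(\xi,\xi)=mC\,\langle\xi,\xi\rangle=\epsilon_{m+1}\,mC$. For the tangential term, since ${\rm Ric}^N(\xi)=mC\,\xi$ is itself proportional to the normal field $\xi$, its tangential projection vanishes, $({\rm Ric}^N(\xi))^{\top}=mC\,\xi^{\top}=0$, because $\xi$ has no component tangent to $M$. Thus the last term in the second equation of Corollary \ref{CD1} disappears entirely.

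Finally I would substitute these two evaluations into the biharmonic hypersurface system of Corollary \ref{CD1} and simplify. In the first equation, $-\epsilon_{m+1}f\,(|A_\xi|^{2}-{\rm Ric}^N(\xi,\xi))$ becomes $-\epsilon_{m+1}f\,|A_\xi|^{2}+\epsilon_{m+1}^{2}\,mC\,f=-f\,(\epsilon_{m+1}\,|A_\xi|^{2}-mC)$, using $\epsilon_{m+1}^{2}=1$; in the second equation the Ricci term is gone, leaving $A_\xi({\rm grad}\,f)+\epsilon_{m+1}\tfrac{m}{2}\,f\,{\rm grad}\,f=0$. These are precisely the two asserted equations (with the $c$ in the statement equal to the curvature $C$). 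I do not expect any genuine obstacle: the corollary is a straightforward restriction of the generic hypersurface equation, and the only things to watch are the dimensional factor in the Ricci formula and the cancellation of the tangential Ricci term that is forced by $\xi$ being a normal field.
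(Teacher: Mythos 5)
Your proposal is correct and follows essentially the same route as the paper, which obtains Corollary \ref{CD1C} precisely as a specialization of Corollary \ref{CD1}, using that a space form of constant curvature $C$ satisfies ${\rm Ric}^N(X,Y)=mC\langle X,Y\rangle$ (so ${\rm Ric}^N(\xi,\xi)=\epsilon_{m+1}mC$ and $({\rm Ric}^N(\xi))^{\top}=0$); the same two Ricci facts are the ones the paper invokes explicitly in its proof of the analogous Einstein-case corollary in Section 4. Your remark that the lowercase $c$ in the statement is the curvature $C$ is also right.
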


\section{Some classifications and constructions of biharmonic \sdo submanifolds}

In this section, we will first prove that a pseudo-umbilical biharmonic \sdo submanifold of a  \sdo manifold has constant mean curvature. We then give a complete classification of biharmonic \sdo  \\hypersurfaces of \sdo space forms with at most two distinct principal curvatures, and finally we use the classifications to give four methods to construct proper biharmonic \sdo submanifolds using precompositions of minimal submanifolds.\\
\indent Recall that \sdo submanifold $\phi: M^m_t\hookrightarrow N^n_s$ is said to be pseudo-umbilical if its shape operator $A_H$ with respect to the mean curvature vector field $H$ is given by 
\begin{equation}\label{SU}
A_H=\langle H, H\rangle I.
\end{equation}

Similar to the definition of a biconservative hypersurface defined in \cite{MOR}, we define a {\em biconservative submanifold} to be a submanifold whose tangential component of the bitension field vanishes identically. It is clear from the definition that any biharmonic submanifold is biconservative. \\
\indent Now we are ready to prove the following theorem which generalizes the corresponding results in the Riemannian case given in \cite{BMO2}.

\begin{theorem}\label{MT2}
An $m$-dimensional ($m\ne 4$) pseudo-umbilical \sdo \\submanifold $\phi: M^m_t\hookrightarrow N^n_s$  is biconservative if and only if it has constant mean curvature. In particular, If an $m$-dimensional ($m\ne 4$) pseudo-umbilical \sdo submanifold $\phi: M^m_t\hookrightarrow N^n_s$  is biharmonic, then it has constant mean curvature. 
\end{theorem}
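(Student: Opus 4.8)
The plan is to read the biconservative condition directly off Theorem~\ref{MT1}. Since biconservativity means that the tangential part of the bitension field vanishes, it is precisely the vanishing of the left-hand side of the second equation of the system~(\ref{BSM}), i.e. of
\[
{\rm Trace}\,A_{\nabla^{\perp}_{(\cdot)}H}(\cdot)+\frac{m}{4}\,{\rm grad}\,\langle H,H\rangle+[{\rm Trace}\,{\rm R}^N(d\phi(\cdot),H)d\phi(\cdot)]^{\top}.
\]
I would show that, under the pseudo-umbilical hypothesis~(\ref{SU}), this tangential field reduces to exactly $\tfrac{4-m}{4}\,{\rm grad}\,\langle H,H\rangle$, after which both implications of the theorem follow at once for $m\ne4$.

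The key computational input is already available inside the proof of Theorem~\ref{MT1}: the general identity~(\ref{B4}), valid for an arbitrary \sdo submanifold, reads
\[
\sum_{i=1}^m\epsilon_i(\nabla^M_{e_i}A_H)(e_i)=\frac{m}{2}\,{\rm grad}\,\langle H,H\rangle+{\rm Trace}\,A_{\nabla^{\perp}_{(\cdot)}H}(\cdot)+[{\rm Trace}\,{\rm R}^N(d\phi(\cdot),H)d\phi(\cdot)]^{\top}.
\]
Solving this for ${\rm Trace}\,A_{\nabla^{\perp}_{(\cdot)}H}(\cdot)$ and substituting into the tangential field above, the two curvature trace terms cancel identically, leaving the tangential bitension field equal to $\sum_{i=1}^m\epsilon_i(\nabla^M_{e_i}A_H)(e_i)-\tfrac{m}{4}\,{\rm grad}\,\langle H,H\rangle$. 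It then remains to evaluate the first sum under the pseudo-umbilical condition $A_H=\langle H,H\rangle I$. Working at a point with a frame satisfying $\nabla^M_{e_j}e_i=0$, I would differentiate the relation $A_H(e_i)=\langle H,H\rangle e_i$ to obtain $(\nabla^M_{e_i}A_H)(e_i)=(e_i\langle H,H\rangle)e_i$, whence $\sum_{i=1}^m\epsilon_i(\nabla^M_{e_i}A_H)(e_i)={\rm grad}\,\langle H,H\rangle$ by the gradient formula~(2). Feeding this back shows the tangential bitension field is precisely $\tfrac{4-m}{4}\,{\rm grad}\,\langle H,H\rangle$.

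The conclusion is then purely algebraic: for $m\ne4$ the coefficient $\tfrac{4-m}{4}$ is nonzero, so vanishing of the tangential bitension field is equivalent to ${\rm grad}\,\langle H,H\rangle=0$, that is, to $\langle H,H\rangle$ being constant, which is the same as constant mean curvature; this proves both directions of the equivalence simultaneously. Since every biharmonic submanifold is biconservative, the ``in particular'' assertion is immediate. I do not expect a serious obstacle, as the statement is essentially a direct corollary of Theorem~\ref{MT1}; the only step requiring genuine care is the differentiation of the pseudo-umbilical identity~(\ref{SU}), where one must track the index signs $\epsilon_i$ correctly and keep in mind that $\langle H,H\rangle$ is a function (of either sign, and vanishing where $H$ is light-like) rather than a constant, so that the gradient term is produced properly and the role of the dimension restriction $m\ne4$ is cleanly isolated.
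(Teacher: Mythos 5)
Your proposal is correct and takes essentially the same route as the paper's own proof: both use identity (\ref{B4}) to trade ${\rm Trace}\,A_{\nabla^{\perp}_{(\cdot)}H}(\cdot)+[{\rm Trace}\,{\rm R}^N(d\phi(\cdot),H)d\phi(\cdot)]^{\top}$ for $\sum_{i=1}^m\epsilon_i(\nabla^M_{e_i}A_H)(e_i)-\frac{m}{2}{\rm grad}\langle H,H\rangle$, evaluate that sum in normal coordinates using the pseudo-umbilical condition (\ref{SU}) to get ${\rm grad}\langle H,H\rangle$, and conclude that the tangential bitension field equals $\frac{4-m}{4}{\rm grad}\langle H,H\rangle$, which vanishes precisely when $\langle H,H\rangle$ is constant once $m\ne 4$.
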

\begin{proof}
Using (\ref{B4}) we have
\begin{eqnarray}\notag
\sum_{i=1}^m\epsilon_i A_{\nabla^{\perp}_{e_i}H}e_i +[{\rm Trace}\,{\rm R}^N(d \phi (\cdot), H)d\phi (\cdot)]^\top&=& \sum_{i=1}^m\epsilon_i(\nabla^M_{e_i}A_H)(e_i)-\frac{m}{2} {\rm grad}\langle H, H\rangle\\\notag
&=& \sum_{i=1}^m\epsilon_i(\nabla^M_{e_i}( A_H (e_i))-\frac{m}{2} {\rm grad}\langle H, H\rangle\\\label{GD11}
&=& (1-\frac{m}{2}) {\rm grad}\langle H, H\rangle,
\end{eqnarray}
where in obtaining the second equality we have used, as in \cite{CMO2}, the normal coordinates at a point and assuming $\{e_i\}$ are the corresponding vector fields, whilst in obtaining the third equality we have used pseudo-umbilical condition (\ref{SU}).\\
\indent Substituting (\ref{GD11}) into the second equation of the biharmonic submanifold equation (\ref{BSM}) we have
\begin{equation}
(4-m){\rm grad}\langle H, H\rangle=0,
\end{equation}
from which we conclude that ${\rm grad}\langle H, H\rangle=0$ for $m\ne 4$. Thus, we obtain the theorem.
\end{proof}

Now we  give a complete  classification of pseudo-Riemannian hypersurfaces with at most two distinct principal curvatures in a \sdo space form, which improves the classifications given in \cite{LD}.
\begin{theorem}\label{MT3}
A proper biharmonic \sdo hypersurface $M^n_t\longrightarrow N^{n+1}_s(C)$ with diagonalizable shape operator that has at most two distinct principal curvatures is a part of the following
\begin{itemize}
\item[(i)] $S^n_s(\frac{1}{\sqrt{2C}})\subseteq S^{n+1}_s(\frac{1}{\sqrt{C}})$ or $S^p_t(\frac{1}{\sqrt{2C}})\times S^{n-p}_{s-t}(\frac{1}{\sqrt{2C}})\subseteq S^{n+1}_s(\frac{1}{\sqrt{C}})$ with $C>0,\;n\ne 2p$ and 
\item[(ii)] $H^n_{s-1}(\frac{1}{\sqrt{-2C}})\subseteq H^{n+1}_s(\frac{1}{\sqrt{-C}})$ or $H^p_{t-1}(\frac{1}{\sqrt{-2C}})\times H^{n-p}_{s-t}(\frac{1}{\sqrt{-2C}})\subseteq H^{n+1}_s(\frac{1}{\sqrt{-C}})$ with $C<0, n\ne 2p$.
\end{itemize}
\end{theorem}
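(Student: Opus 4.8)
The plan is to reduce biharmonicity to the hypersurface system of Corollary \ref{CD1C} (applied with $m=n$) and then run a two-phase analysis. Writing $H=f\xi$ and $A=A_\xi$, the two equations read
\[
\Delta f=f\bigl(\epsilon_{n+1}|A|^{2}-nC\bigr),\qquad A({\rm grad}\,f)=-\epsilon_{n+1}\frac{n}{2}\,f\,{\rm grad}\,f .
\]
The second equation is the structural backbone: at every point where ${\rm grad}\,f$ is nonzero and non-null it is a principal direction of $A$ with principal curvature $\lambda_{1}=-\epsilon_{n+1}\frac{n}{2}f$. Since $A$ is diagonalizable with at most two distinct principal curvatures, I would fix a local orthonormal principal frame and split the tangent bundle into its (at most two) eigendistributions. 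Two \sdo features must be tracked throughout: the signs $\epsilon_{i}$ entering the trace ${\rm Trace}\,A=\epsilon_{n+1}nf$ and the norm $|A|^{2}=\sum_{i}\kappa_{i}^{2}$, and the possibility that ${\rm grad}\,f$ is null, where the normalization ${\rm grad}\,f/|{\rm grad}\,f|$ breaks down and a separate limiting argument is required.

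The first and hardest phase is to show that $f$ is constant; this is the step I expect to be the main obstacle. I would argue by contradiction on the open set $U=\{{\rm grad}\,f\neq0\}$. On $U$ the hypersurface cannot be pseudo-umbilic, since umbilicity together with $\lambda_{1}=-\epsilon_{n+1}\frac n2 f$ and ${\rm Trace}\,A=\epsilon_{n+1}nf$ would force $f=0$; hence there are exactly two distinct principal curvatures, $\lambda_{1}$ along $e_{1}={\rm grad}\,f/|{\rm grad}\,f|$ and a second one $\lambda_{2}$ on the complementary distribution. The trace identity then expresses $\lambda_{2}$, and therefore $|A|^{2}$, as explicit multiples of $f$, so the first equation becomes $\Delta f=f(\epsilon_{n+1}c\,f^{2}-nC)$ for a constant $c$ depending only on $n$. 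Because the ambient space is a space form, the normal part of its curvature vanishes and the Codazzi equation reduces to $(\nabla_{X}A)Y=(\nabla_{Y}A)X$; feeding the principal frame into it yields the connection coefficients $\langle\nabla_{e_i}e_j,e_k\rangle$ in terms of the derivatives $e_i(\kappa_j)$, and combining these with $e_j(f)=0$ for $j\ge2$ gives ${\rm div}\,e_{1}$ and $e_{1}(\log|{\rm grad}\,f|)$. Computing $\Delta f$ from these and matching it against $f(\epsilon_{n+1}c\,f^{2}-nC)$ produces an overdetermined ODE along the $e_{1}$-integral curves that is incompatible with $f$ being nonconstant, the desired contradiction. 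The delicate points are the bookkeeping of the $\epsilon_{i}$ and the exclusion of the null-gradient locus, following the Riemannian scheme of \cite{BMO1} and its \sdo adaptation in \cite{LD}.

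With $f$ a nonzero constant, the first equation collapses to $\epsilon_{n+1}|A|^{2}=nC$, so $|A|^{2}$ is constant. If $A$ has a single principal curvature the hypersurface is totally umbilic; solving ${\rm Trace}\,A=\epsilon_{n+1}nf$ together with $\epsilon_{n+1}|A|^{2}=nC$ fixes the principal curvature (forcing $\epsilon_{n+1}C>0$) and identifies the hypersurface as a totally umbilic geodesic sphere of radius $1/\sqrt{2C}$, namely $S^{n}_{s}(1/\sqrt{2C})\subseteq S^{n+1}_{s}(1/\sqrt{C})$ when $C>0$, or $H^{n}_{s-1}(1/\sqrt{-2C})\subseteq H^{n+1}_{s}(1/\sqrt{-C})$ when $C<0$. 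If $A$ has two distinct principal curvatures $\lambda,\mu$ of multiplicities $p,n-p$, then on the open set where the multiplicities are locally constant the two constant relations $p\lambda+(n-p)\mu=\epsilon_{n+1}nf$ and $p\lambda^{2}+(n-p)\mu^{2}=nC\epsilon_{n+1}$ exhibit $\lambda$ as a root of a quadratic with constant coefficients; hence $\lambda,\mu$ are themselves constant and the hypersurface is isoparametric.

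Finally, a \sdo hypersurface of a space form with two distinct constant principal curvatures of multiplicities $p$ and $n-p$ is, by integrating its structure equations (the two eigendistributions being integrable with totally geodesic leaves), locally a product of two \sdo space forms of the same curvature sign; imposing $\epsilon_{n+1}|A|^{2}=nC$ on the standard product $S^{p}(r_{1})\times S^{n-p}(r_{2})$ (resp. the hyperbolic analogue) leaves, besides the minimal configuration $r_{1}^{2}/r_{2}^{2}=p/(n-p)$, only the equal-radii solution $r_{1}=r_{2}=1/\sqrt{2C}$ (resp. $1/\sqrt{-2C}$). Reading off the indices of the two factors gives the splittings $S^{p}_{t}\times S^{n-p}_{s-t}$ and $H^{p}_{t-1}\times H^{n-p}_{s-t}$ of (i) and (ii), and properness (non-minimality) excludes the minimal configuration and, for the equal-radii product, forces the mean curvature $\propto(n-p)-p$ to be nonzero, i.e.\ $n\neq2p$. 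This exhausts all cases and yields the stated list.
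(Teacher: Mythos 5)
Your Phases 2--3 are essentially sound (the algebra of the radii, the exclusion of the minimal Clifford-type configuration, and the condition $n\ne 2p$ from non-minimality all check out), but Phase 1 --- proving that the mean curvature function $f$ is constant --- is the entire analytic content of the theorem, and you do not prove it. You set up the principal frame, invoke Codazzi, and then assert that matching the resulting expression for $\Delta f$ against $\Delta f=f(\epsilon_{n+1}c\,f^{2}-nC)$ ``produces an overdetermined ODE along the $e_{1}$-integral curves that is incompatible with $f$ being nonconstant.'' That incompatibility is precisely what has to be proved: in the Riemannian case it is the long computation at the heart of \cite{BMO1}, and in the indefinite case it is the main body of \cite{LD}. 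Moreover, the pseudo-Riemannian obstruction you flag --- the locus where ${\rm grad}\,f$ is null --- cannot be dispatched by ``a separate limiting argument'': $\langle {\rm grad}\,f,{\rm grad}\,f\rangle=0$ can hold on an open set, on which there is no orthonormal principal frame containing ${\rm grad}\,f$ and your entire frame computation breaks down; handling this case is one of the genuinely new difficulties of the indefinite setting. A secondary gap of the same kind: your claim that a hypersurface with two distinct constant principal curvatures and diagonalizable shape operator in a pseudo-Riemannian space form is locally a standard product of two pseudo-Riemannian space forms is also only asserted (``by integrating its structure equations''), and is again nontrivial in indefinite signature.

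For comparison, the paper does none of this analysis. It quotes the classification of Liu--Du \cite{LD} as a black box: such a hypersurface is either the umbilic $S^{n}_{s}(1/\sqrt{2C})$ (resp.\ $H^{n}_{s-1}(1/\sqrt{-2C})$) or a product $S^{p}_{t}(1/\sqrt{C_{1}})\times S^{n-p}_{s-t}(1/\sqrt{C_{2}})$ (resp.\ the hyperbolic analogue) whose curvatures satisfy $\frac{1}{C_{1}}+\frac{1}{C_{2}}=\frac{1}{C}$, $pC_{1}+(n-p)C_{2}=2nC$ and $p^{2}C_{1}+(n-p)^{2}C_{2}\ne n^{2}C$. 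The paper's entire proof is then the algebra you only partially reproduce: the first two relations form a quadratic with exactly two solutions, $C_{1}=C_{2}=2C$ and $(C_{1},C_{2})=(nC/p,\,nC/(n-p))$, and the latter is exactly the minimal configuration excluded by the inequality, leaving $C_{1}=C_{2}=2C$. So the honest repair of your proposal is either to carry out Phase 1 in full (including the null-gradient locus and the product structure theorem), or to restructure the argument as the paper does: cite \cite{LD} and reduce the theorem to solving that algebraic system.
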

\begin{proof}
By the classification of Liu-Du \cite{LD}, we know that a nondegenerate proper biharmonic hypersurface $M^n_t\longrightarrow N^{n+1}_s(C)$ with diagonalizable shape operator that has at most two distinct principal curvatures is a part of one of the following hypersurfaces:
\begin{itemize}
\item[(i)] $S^n_s(\frac{1}{\sqrt{2C}})\subseteq S^{n+1}_s(\frac{1}{\sqrt{C}})$ or $S^p_t(\frac{1}{\sqrt{C_1}})\times S^{n-p}_{s-t}(\frac{1}{\sqrt{C_2}})\subseteq S^{n+1}_s(\frac{1}{\sqrt{C}})$ \\with $C>0, n\ne 2p$ and $C_1, C_2$ satisfying $\frac{1}{C_1}+\frac{1}{C_2}=\frac{1}{C}, \; p^2C_1+(n-p)^2C_2\ne n^2C$, and $pC_1+(n-p)C_2=2nC$,
\item[(i)] $H^n_{s-1}(\frac{1}{\sqrt{-2C}})\subseteq H^{n+1}_s(\frac{1}{\sqrt{-C}})$ or $H^p_{t-1}(\frac{1}{\sqrt{-C_1}})\times H^{n-p}_{s-t}(\frac{1}{\sqrt{-C_2}})\subseteq H^{n+1}_s(\frac{1}{\sqrt{-C}})$ \\with $C<0, n\ne 2p$ and $C_1, C_2<0$ satisfying $\frac{1}{C_1}+\frac{1}{C_2}=\frac{1}{C}, \; p^2C_1+(n-p)^2C_2\ne n^2C$, and $pC_1+(n-p)C_2=2nC$.
\end{itemize}
Solving the equations
\begin{eqnarray}
\begin{cases}
\frac{1}{C_1}+\frac{1}{C_2}=\frac{1}{C}, \\
p^2C_1+(n-p)^2C_2\ne n^2C,\\
pC_1+(n-p)C_2=2nC
\end{cases}
\end{eqnarray}
for $C_1$ and $C_2$ we have the unique solution $C_1=C_2=2C$. Thus, we obtain the theorem.
\end{proof}
As an immediate consequence, we have
\begin{corollary}\label{SSn}
A proper biharmonic \sdo hypersurface $M^n_t\longrightarrow N^{n+1}_s(1)$ with diagonalizable shape operator that has at most two distinct principal curvatures is a part of one of the following hypersurfaces:
\begin{itemize}
\item[(i)] $S^n_s(\frac{1}{\sqrt{2}})\subseteq S^{n+1}_s(1)$ or $S^p_t(\frac{1}{\sqrt{2}})\times S^{n-p}_{s-t}(\frac{1}{\sqrt{2}})\subseteq S^{n+1}_s(1)$ with $n\ne 2p$ and 
\item[(ii)] $H^n_{s-1}(\frac{1}{\sqrt{2}})\subseteq H^{n+1}_s(1)$ or $H^p_{t-1}(\frac{1}{\sqrt{2}})\times H^{n-p}_{s-t}(\frac{1}{\sqrt{2}})\subseteq H^{n+1}_s(1)$ with $ n\ne 2p$.
\end{itemize}
\end{corollary}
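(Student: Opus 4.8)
The plan is to recognize that Corollary \ref{SSn} is simply the unit-radius specialization of Theorem \ref{MT3}, so the entire argument reduces to substituting the appropriate value of $C$ into the conclusion of that theorem and simplifying the radii. No new geometric input is required; the work is purely arithmetic bookkeeping of the constants.

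First I would treat the sphere case (i). Here the ambient space form $N^{n+1}_s(1)$ has sectional curvature $C=1$, so it is precisely $S^{n+1}_s(1)=S^{n+1}_s(1/\sqrt{C})$ under the conventions fixed in Section 1. Substituting $C=1$ into Theorem \ref{MT3}(i) gives $1/\sqrt{2C}=1/\sqrt{2}$, so the hypersurface is a part of $S^n_s(1/\sqrt{2})\subseteq S^{n+1}_s(1)$ or of the product $S^p_t(1/\sqrt{2})\times S^{n-p}_{s-t}(1/\sqrt{2})\subseteq S^{n+1}_s(1)$ with $n\ne 2p$, which is exactly the first alternative of the corollary.

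Next I would treat the hyperbolic case (ii). The only point demanding care is the reading of the symbol $N^{n+1}_s(1)$: since a pseudo-hyperbolic space carries negative curvature, for this case one must interpret the ambient space as the unit-radius space $H^{n+1}_s(1)$, which by the conventions of Section 1 has sectional curvature $C=-1$ and equals $H^{n+1}_s(1/\sqrt{-C})$. Substituting $C=-1$ into Theorem \ref{MT3}(ii) yields $1/\sqrt{-2C}=1/\sqrt{2}$, producing $H^n_{s-1}(1/\sqrt{2})\subseteq H^{n+1}_s(1)$ or $H^p_{t-1}(1/\sqrt{2})\times H^{n-p}_{s-t}(1/\sqrt{2})\subseteq H^{n+1}_s(1)$ with $n\ne 2p$, the second alternative of the corollary.

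The main (and essentially only) obstacle is this handling of the radius-versus-curvature convention across the two cases: one must be sure that the single label $N^{n+1}_s(1)$ is being used to denote the unit-radius sphere in case (i) and the unit-radius pseudo-hyperbolic space in case (ii), so that the substitutions $C=1$ and $C=-1$ respectively reproduce the ambient spaces named in the conclusion. Once this convention is pinned down, both alternatives drop out immediately from Theorem \ref{MT3}, and the corollary follows.
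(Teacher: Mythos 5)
Your proposal is correct and matches the paper's own treatment: the paper gives no separate argument, introducing the corollary with ``As an immediate consequence,'' i.e.\ it is exactly the specialization of Theorem~\ref{MT3} to $C=1$ (sphere case) and $C=-1$ (pseudo-hyperbolic case), which is what you carried out. Your remark about the notational point---that $N^{n+1}_s(1)$ must be read as the unit-curvature space form, so $S^{n+1}_s(1)$ in case (i) and $H^{n+1}_s(1)$ (curvature $-1$) in case (ii)---is the right way to resolve the only ambiguity.
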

\begin{example}
Proper biharmonic hypersurfaces of $ N_s^{4}(C)$ having diagonalizable shape operator with at most two principal curvatures can be listed as follows:
\begin{itemize}
\item[I.] {\em Riemannian Cases}:
\begin{itemize}
\item[(i)] $\mathbb{R}^4, \;\;\;H^4(1)$:\;\;\;\;\; None, 
\item[(ii)] $S^{4}(1)$: $S^{3}(\frac{1}{\sqrt{2}})$, or $S^1(\frac{1}{\sqrt{2}})\times S^{2}(\frac{1}{\sqrt{2}})$ . 
\end{itemize}
\item[II.] {\em Pseudo-Riemannian Cases of index 1}:
\begin{itemize}
\item[(i)] $\mathbb{R}^4_1$:\;\;\;\;\; None, 
\item[(ii)] $S_1^{4}(1)$: $S_1^{3}(\frac{1}{\sqrt{2}})$, $S^1(\frac{1}{\sqrt{2}})\times S_1^{2}(\frac{1}{\sqrt{2}})$, or $S_1^1(\frac{1}{\sqrt{2}})\times S^{2}(\frac{1}{\sqrt{2}})$ . 
\item[(iii)] $H_1^{4}(1)$: $H^{3}(\frac{1}{\sqrt{2}})$, $H^{1}(\frac{1}{\sqrt{2}})\times H^{2}(\frac{1}{\sqrt{2}})$, or $H^{1}(\frac{1}{\sqrt{2}})\times H^{2}(\frac{1}{\sqrt{2}})$ . 
\end{itemize}
\item[III.] {\em Pseudo-Riemannian Cases of index 2}:
\begin{itemize}
\item[(i)] $\mathbb{R}^4_2$:\;\;\;\;\; None, 
\item[(ii)] $S_2^{4}(1)$: $S_2^{3}(\frac{1}{\sqrt{2}})$, $S_1^1(\frac{1}{\sqrt{2}})\times S_1^{2}(\frac{1}{\sqrt{2}})$, or $S_1^1(\frac{1}{\sqrt{2}})\times S_1^{2}(\frac{1}{\sqrt{2}})$ . 
\item[(iii)] $H_2^{4}(1)$: $H_1^{3}(\frac{1}{\sqrt{2}})$, $H^{1}(\frac{1}{\sqrt{2}})\times H_1^{2}(\frac{1}{\sqrt{2}})$, or $H_1^{1}(\frac{1}{\sqrt{2}})\times H^{2}(\frac{1}{\sqrt{2}})$ . 
\end{itemize}
\item[IV.] {\em Pseudo-Riemannian Cases of index 3}:
\begin{itemize}
\item[(i)] $\mathbb{R}^4_3$:\;\;\;\;\; None, 
\item[(ii)] $S_3^{4}(1)$: $S_3^{3}(\frac{1}{\sqrt{2}})$, $S_1^1(\frac{1}{\sqrt{2}})\times S_2^{2}(\frac{1}{\sqrt{2}})$. 
\item[(iii)] $H_3^{4}(1)$: $H_2^{3}(\frac{1}{\sqrt{2}})$, $H^{1}(\frac{1}{\sqrt{2}})\times H_2^{2}(\frac{1}{\sqrt{2}})$, or $H_1^{1}(\frac{1}{\sqrt{2}})\times H_1^{2}(\frac{1}{\sqrt{2}})$ . 
\end{itemize}
\end{itemize}
\end{example}
The complete classifications given in Corollary \ref{SSn} reveal the similarity between the families of proper biharmonic \sdo hypersurfaces of \sdo space form and the classifications of proper biharmonic hypersurfaces in $S^{n+1}(1)$ obtained by Caddeo-Montaldo-Oniciuc in \cite{BMO1}. An important application of this lies in the fact that the two special proper biharmonic \sdo hypersurfaces classified in Corollary \ref{SSn}, as in the Riemannian cases, can help us to further study the constructions and classifications of proper biharmonic \sdo submanifolds in \sdo space forms.
\begin{theorem}\label{GZ}
A minimal submanifold $M^m_r\longrightarrow H^n_{s-1}(a)\times \{b\}$ with $a^2+b^2=1, a\in (0, 1)$ is a proper biharmonic submanifold of $H^{n+1}_{s}(1)$ if and only if $a=1/\sqrt{2}$ and $b=\pm 1/\sqrt{2}$. In particular, any minimal submanifold $M^m\longrightarrow H^n(1/\sqrt{2})$ of hyperbolic space gives rise to a proper biharmonic submanifold of Anti-de Sitter space $H^{n+1}_{1}(1)$ via composition: $M^m\longrightarrow H^n(1/\sqrt{2})\longrightarrow H^{n+1}_{1}(1)$.
\end{theorem}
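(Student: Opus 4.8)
The plan is to verify the space-form biharmonic system \eqref{BSMC} with $C=-1$ (since $H^{n+1}_s(1)$ has constant sectional curvature $-1$) by reducing everything to the extrinsic geometry of the totally umbilic hypersurface $\Sigma:=H^n_{s-1}(a)\times\{b\}$ inside $H^{n+1}_s(1)$, and then invoking transitivity of second fundamental forms for the tower $M^m_r\subset\Sigma\subset H^{n+1}_s(1)$ together with the minimality of $M$ in $\Sigma$. First I would fix the realization $H^{n+1}_s(1)=\{x\in\r^{n+2}_{s+1}:\langle x,x\rangle=-1\}$ and split $\r^{n+2}_{s+1}=\r^{n+1}_s\times\r$ with the extra coordinate \emph{time-like}, so that $\Sigma$ is the image of $y\mapsto(y,b)$ with $\langle y,y\rangle=-a^2$; the constraint $a^2+b^2=1$ is precisely what places $\Sigma$ in $H^{n+1}_s(1)$. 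A direct check then shows that $\xi(y):=(\tfrac{b}{a}y,-a)$ is tangent to $H^{n+1}_s(1)$, normal to $\Sigma$, and time-like with $\langle\xi,\xi\rangle=-1$.

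Using the Gauss formula $D_XY=\nabla^{N}_XY+\langle X,Y\rangle\,x$ for $H^{n+1}_s(1)\subset\r^{n+2}_{s+1}$ and differentiating $\xi$, I would establish that $\Sigma$ is totally umbilic with $A_\xi=-\tfrac{b}{a}\,I$, equivalently $B^{\Sigma}(X,Y)=\tfrac{b}{a}\langle X,Y\rangle\,\xi$. Writing $B^{M}=B^{M\subset\Sigma}+B^{\Sigma}|_{TM}$ and taking the trace, the minimality of $M$ in $\Sigma$ annihilates the first term, so the mean curvature vector of the composition is $H=\tfrac{b}{a}\,\xi$. Hence $\langle H,H\rangle=-b^2/a^2$ is constant, and since $\nabla^{N}_X\xi=\tfrac{b}{a}X$ is tangent to $M$ for every $X\in TM$, the field $H$ is parallel in the normal bundle, i.e. $\nabla^{\perp}H=0$ and $\Delta^{\perp}H=0$. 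The second equation of \eqref{BSMC} therefore holds identically.

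For the first equation I would compute $A_H=-\tfrac{b^2}{a^2}\,I$ (the term $B^{M\subset\Sigma}$ is orthogonal to $\xi$, hence to $H$), which gives ${\rm Trace}\,B(A_H(\cdot),\cdot)=-\tfrac{b^2}{a^2}\,mH$, so the normal equation collapses to $mH\bigl(1-b^2/a^2\bigr)=0$. Since proper biharmonicity forces $H\neq0$, i.e. $b\neq0$, this yields $b^2=a^2$, and with $a^2+b^2=1$, $a\in(0,1)$ we get $a=1/\sqrt2$, $b=\pm1/\sqrt2$; conversely these values make $A_H=-I$ and satisfy both equations, so the immersion is proper biharmonic. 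The ``in particular'' clause is the case $s=1$, where $H^n_{s-1}(a)=H^n(a)$ is Riemannian hyperbolic space and the ambient is Anti-de Sitter space $H^{n+1}_1(1)$. I expect the only delicate point to be the pseudo-Riemannian sign bookkeeping—checking that the slicing direction must be time-like and that $\xi$ is a unit time-like normal ($\epsilon_{m+1}=-1$)—after which the traces and the final reduction are routine.
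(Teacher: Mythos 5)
Your proposal is correct and takes essentially the same approach as the paper: realize $\Sigma=H^n_{s-1}(a)\times\{b\}$ as a totally umbilic hypersurface of $H^{n+1}_s(1)\subset\r^{n+2}_{s+1}$ with a parallel unit time-like normal, use minimality of $M$ in $\Sigma$ to get the parallel mean curvature vector $H=\tfrac{b}{a}\xi$ and $A_H=-\tfrac{b^2}{a^2}I$, and reduce the space-form system (\ref{BSMC}) with $C=-1$ to $m\bigl(1-\tfrac{b^2}{a^2}\bigr)H=0$, which forces $a^2=b^2$ and hence $a=\tfrac{1}{\sqrt 2}$, $b=\pm\tfrac{1}{\sqrt 2}$. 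The only cosmetic difference is that you write down the unit normal $\xi=(\tfrac{b}{a}y,-a)$ directly, whereas the paper takes $(y,-a^2/b)$ and normalizes by $c$ with $c^2=a^2+a^4/b^2$; these are the same vector field (up to the sign of $b$), so the two computations coincide.
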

\begin{proof}
Let $\{x^i\}$ be the rectangular coordinates and $\langle, \rangle$ denote the pseudo-Euclidean product of $\r^{n+2}_{s+1}=\r^{n+1}_{s}\times \r^1_1$. It is easy to see that the tangent vector fields of $H^n_{s-1}(a)\times \{b\}$ can be described as $$\{X=(X^1,\cdots, X^{n+1},0)\in \r^{n+2}_{s+1}| \sum_{i=1}^{n+1}x^iX^i=0 \}.$$
Let $p=(x^1, \cdots, x^{n+1},b)$ and $\xi=(x^1, \cdots, x^{n+1}, -a^2/b)$, then one can check that
\begin{equation}\notag
\langle \xi, X\rangle=0,\;\;\langle \xi, p\rangle=0,\;\;\langle \xi, \xi\rangle=-a^2-a^4/b^2<0,
\end{equation}
and hence $\eta=\frac{1}{c}\xi$ (with $c^2=a^2+a^4/b^2$) is a time-like unit normal vector field of $H^n_{s-1}(a)\times \{b\}$ in $H^{n+1}_s(1)$. Using Weingarten and Gauss equations we compute
\begin{eqnarray}\notag
\nabla^{H^{n+1}_s}_X\eta &=& \nabla^{\perp}_X\eta-A_{\eta}(X)\\\notag
&=& \frac{1}{c}\nabla^{H^{n+1}_s}_X\xi=\frac{1}{c}\{\nabla^{\r^{n+2}_{s+1}}_X\xi-\langle \xi, X\rangle\,p\}\\\label{31}
&=& \frac{1}{c}\nabla^{\r^{n+2}_s}_{(X^1,\cdots, X^{n+1},0)}\,(x^1, \cdots, x^{n+1},-a^2/b)\\\notag
&=& \frac{1}{c} X,
\end{eqnarray}
where the third equality was obtained by using the fact that the second fundamental form of $H^{n+1}_s(1)$ in $\r^{n+2}_{s+1}$ is given by $b(X, Y)= -\langle {\tilde A}_p(X), Y\rangle\,p=-\langle -X, Y\rangle\,p=\langle X, Y\rangle\,p$ (cf. \cite{Ch0}).\\
It follows from (\ref{31}) that $\nabla^{\perp}_X\eta=0$ and $A_{\eta}(X)=-\frac{1}{c} X$.\\
\indent We introduce the notations $\phi: M^m_r\longrightarrow H^n_{s-1}(a)\times \{b\}$ and $j:H^n_{s-1}(a)\times \{b\}\longrightarrow H^{n+1}_s(1)$ for the isometric immersions of the submanifolds. Then, a straightforward computation gives the mean curvature vector $H$ of $M^m_r$ in $H^{n+1}_s(1)$ as
\begin{eqnarray}
H(\phi\circ j) &=& H(\phi)+\frac{1}{m}\sum_{i=1}^m\epsilon_i b(X_i, X_i)\\\notag
&=& H(\phi)-\langle \frac{1}{m}\sum_{i=1}^m\epsilon_i b(X_i, X_i), \eta\rangle \,\eta\\\notag
&=& H(\phi)- \frac{1}{m}\sum_{i=1}^m\epsilon_i\langle A_{\eta}(X_i), X_i\rangle \,\eta\\\notag
&=& H(\phi)+ \frac{1}{cm}\sum_{i=1}^m\epsilon_i \langle X_i, X_i\rangle \,\eta=H(\phi)+ \frac{1}{c}\eta.
\end{eqnarray}
\indent By the assumption that $\phi: M^m_r\longrightarrow H^n_{s-1}(a)\times \{b\}$ is minimal we conclude that the mean curvature vector field of $M^m_r$ in $H^{n+1}_s(1)$ is given by 
\begin{equation}
H(\phi\circ j) = \frac{1}{c}\eta.
\end{equation}
\indent It follows that $A_H(X)=\frac{1}{c}A_{\eta}(X)=-\frac{1}{c^2}X$ for any $X$ tangent to $M^m_r$ and hence, $M^m_r$ is a pseudo-umbilical non-minimal submanifold $H^{n+1}_s(1)$ with constant mean curvature vector field $H=\frac{1}{c}\eta$. Therefore, as in the proof of Theorem \ref{MT2}, the second equation of (\ref{BSMC}) is satisfied. Substituting $\nabla^{\perp}_XH=0$ and $A_H(X)=-\frac{1}{c^2}X$ into the first equation of (\ref{BSMC}) we have
\begin{equation}
0=\sum_{i=1}^m\,\epsilon_i B(A_H(e_i), e_i)+mH=-\frac{1}{c^2}\sum_{i=1}^m\epsilon_iB(e_i, e_i)+mH=(1-\frac{1}{c^2})mH.
\end{equation}
Solving the equation we have $c^2=1$, i.e., $a^2+a^4/b^2=1$ which, together with $a^2+b^2=1$, gives $a=1/\sqrt{2}, b=\pm 1/\sqrt{2}$. Thus, we obtain the theorem.
\end{proof}
A similar proof gives the following construction of proper biharmonic submanifolds of \sdo spheres.
\begin{theorem}\label{GZ1}
A minimal submanifold $M^m_r\longrightarrow S^n_s(a)\times \{b\}$ with $a^2+b^2=1, a\in (0, 1)$ is a proper biharmonic submanifold of $S^{n+1}_s(1)$ if and only if $a=1/\sqrt{2}$ and $b=\pm 1/\sqrt{2}$. In particular, any minimal submanifold $M^m_r\longrightarrow S^n_1(1/\sqrt{2})\; (r=0, 1)$ of \sdo sphere gives rise to a proper biharmonic submanifold of the de Sitter space $S^{n+1}_{1}(1)$ via composition: $M_r^m\longrightarrow S_1^n(1/\sqrt{2})\longrightarrow S^{n+1}_{1}(1)$.
\end{theorem}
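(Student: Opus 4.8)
The plan is to mirror the proof of Theorem \ref{GZ}, working in the flat ambient space $\r^{n+2}_s=\r^{n+1}_s\times\r$; note that here the extra coordinate is space-like, so the index stays $s$ rather than increasing to $s+1$ as in the hyperbolic case. First I would write a point of $S^n_s(a)\times\{b\}$ as $p=(x^1,\dots,x^{n+1},b)$ with $\langle p,p\rangle=a^2+b^2=1$, describe its tangent vectors as $X=(X^1,\dots,X^{n+1},0)$ satisfying $\sum_{i=1}^{n+1} x^iX^i=0$ (pseudo-Euclidean product), and take the candidate normal $\xi=(x^1,\dots,x^{n+1},-a^2/b)$. The same three computations as in Theorem \ref{GZ} give $\langle\xi,X\rangle=0$ and $\langle\xi,p\rangle=0$, so that $\xi\in T_pS^{n+1}_s(1)$ is normal to the hypersurface; the one genuine difference appears in $\langle\xi,\xi\rangle=a^2+a^4/b^2>0$, which is now positive, so $\eta=\frac{1}{c}\xi$ with $c^2=a^2+a^4/b^2$ is a space-like unit normal field.

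Next I would compute the shape operator via the Gauss and Weingarten formulas. The second fundamental form of $S^{n+1}_s(1)$ in $\r^{n+2}_s$ is $b(X,Y)=-\langle X,Y\rangle\,p$ (the sign opposite to the hyperbolic case, since the position vector is now a space-like unit normal with $\langle p,p\rangle=1$). Using $\langle\xi,X\rangle=0$ this yields $\nabla^{S^{n+1}_s}_X\xi=\nabla^{\r^{n+2}_s}_X\xi=X$, exactly as before, so that $\nabla^{\perp}_X\eta=0$ and $A_\eta(X)=-\frac{1}{c}X$.

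I would then compute the mean curvature of the composition. Writing $\phi:M^m_r\to S^n_s(a)\times\{b\}$ and $j$ for the inclusion, the same chain of identities as in Theorem \ref{GZ} --- now without the extra sign coming from a time-like normal --- gives $H(\phi\circ j)=H(\phi)-\frac{1}{c}\eta$. Minimality of $\phi$ reduces this to $H=-\frac{1}{c}\eta$, whence $A_H(X)=\frac{1}{c^2}X$ and $\langle H,H\rangle=\frac{1}{c^2}$, so $M^m_r$ is pseudo-umbilical (condition (\ref{SU})) with $\nabla^{\perp} H=0$, i.e.\ constant mean curvature. As in the proof of Theorem \ref{MT2}, the tangential (second) equation of (\ref{BSMC}) is then satisfied automatically. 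Substituting $\Delta^{\perp} H=0$, $A_H=\frac{1}{c^2}I$, and $C=1$ into the normal (first) equation of (\ref{BSMC}) collapses it to $(\frac{1}{c^2}-1)mH=0$; since a proper biharmonic immersion has $H\neq 0$, this forces $c^2=1$, i.e.\ $a^2+a^4/b^2=1$, which together with $a^2+b^2=1$ gives $a=1/\sqrt{2}$ and $b=\pm1/\sqrt{2}$. The ``in particular'' statement is the special case $s=1$, $a=b=1/\sqrt{2}$, in which a minimal $M^m_r$ inside $S^n_1(1/\sqrt{2})$ necessarily has index $r\in\{0,1\}$.

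The main thing to watch is the sign bookkeeping in the \sdo setting, since three ingredients each flip relative to Theorem \ref{GZ}: the unit normal $\eta$ is space-like rather than time-like, the sphere's second fundamental form carries the opposite sign, and the ambient curvature term is $-mCH=-mH$ rather than $+mH$. The only real content of the proof is checking that these sign changes are mutually consistent and conspire to leave the final relation $(\frac{1}{c^2}-1)mH=0$ --- and hence the conclusion $a=1/\sqrt{2}$ --- identical to the hyperbolic case; everything else is the routine Gauss--Weingarten computation already carried out in Theorem \ref{GZ}.
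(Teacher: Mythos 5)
Your proof is correct and is exactly the argument the paper intends: the paper gives no separate proof of Theorem \ref{GZ1}, saying only that ``a similar proof'' to Theorem \ref{GZ} works, and you carry out that adaptation with the right sign bookkeeping (space-like normal $\eta$ with $\langle\xi,\xi\rangle=a^2+a^4/b^2>0$, second fundamental form $b(X,Y)=-\langle X,Y\rangle\,p$ of the sphere, $H(\phi\circ j)=H(\phi)-\frac{1}{c}\eta$, and $A_H=\frac{1}{c^2}I$ so the submanifold is pseudo-umbilical with parallel mean curvature). The reduction of the normal equation of (\ref{BSMC}) to $(\frac{1}{c^2}-1)mH=0$, forcing $c^2=1$ and hence $a=1/\sqrt{2}$, $b=\pm 1/\sqrt{2}$, is precisely the spherical analogue of the computation in Theorem \ref{GZ}.
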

Also, a proof similar to that of Theorem \ref{GZ} and to the proof of Theorem 3.11 in \cite{CMO2} in the Riemannian case gives
\begin{theorem}\label{GZ2}
(i) Let $\phi:M^m_r\longrightarrow H^p_{t-1}(\frac{1}{\sqrt{2}})$ and $\varphi: N^q_l\longrightarrow H^{n-p}_{s-t}(\frac{1}{\sqrt{2}})$ be minimal submanifolds. Then, $\Phi: M^m_r\times N^q_l\longrightarrow H^{n+1}_s(1)$ with $\Phi(x,y)=(\phi(x), \varphi(y))$ is a proper biharmonic submanifold of $ H^{n+1}_s(1)$.\\
(ii) Let $\phi:M^m_r\longrightarrow S^p_t(\frac{1}{\sqrt{2}})$ and $\varphi: N^q_l\longrightarrow S^{n-p}_{s-t}(\frac{1}{\sqrt{2}})$ be minimal submanifolds. Then, $\Phi: M^m_r\times N^q_l\longrightarrow S^{n+1}_s(1)$ with $\Phi(x,y)=(\phi(x), \varphi(y))$ is a proper biharmonic submanifold of $ S^{n+1}_s(1)$.
\end{theorem}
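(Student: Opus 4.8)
The plan is to exploit the factorization $\Phi=\iota\circ(\phi\times\varphi)$, where $\iota$ is the inclusion of the proper biharmonic hypersurface $P:=H^p_{t-1}(\tfrac{1}{\sqrt2})\times H^{n-p}_{s-t}(\tfrac{1}{\sqrt2})\hookrightarrow H^{n+1}_s(1)$ provided by Corollary \ref{SSn}, and to reduce everything to the geometry of $M^m_r\times N^q_l$ sitting inside this one distinguished hypersurface. First I would note that, the metric on $P$ being a product, the second fundamental form of $M^m_r\times N^q_l$ in $P$ is block-diagonal and hence equals the pair $(B^{\phi},B^{\varphi})$ of the second fundamental forms of the two factors; since $\phi$ and $\varphi$ are minimal, $M^m_r\times N^q_l$ is a \emph{minimal} submanifold of $P$. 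This is the only place minimality is used, but it is decisive: it kills every contribution to the mean curvature that is tangent to $P$.

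Next I would record the extrinsic geometry of $P$ in $H^{n+1}_s(1)$, exactly as in the proof of Theorem \ref{GZ}. Embedding $H^{n+1}_s(1)$ in $\r^{n+2}_{s+1}$ and splitting the position vector along the two factors, one obtains a timelike unit normal $\eta$ of $P$ (so $\langle\eta,\eta\rangle=\epsilon_\eta=-1$) whose shape operator $A_\eta$ is $\lambda_1\,\mathrm{Id}$ on the $H^p$-directions and $\lambda_2\,\mathrm{Id}$ on the $H^{n-p}$-directions, with $\{\lambda_1,\lambda_2\}=\{1,-1\}$ forced by the common radius $1/\sqrt2$; in particular $\nabla^{\perp}_X\eta=0$, because $\nabla^{H^{n+1}_s}_X\eta=-A_\eta X=-\lambda_iX$ is tangent to $P$, hence tangent to $M^m_r\times N^q_l$. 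Choosing an adapted orthonormal frame $\{E_a\}$ with the first $m$ vectors tangent to $M$ and the last $q$ tangent to $N$, and using minimality in $P$, I would compute the mean curvature of $\Phi$ from the Gauss formula as $H=\frac{1}{m+q}\sum_a\epsilon_a B^{P}(E_a,E_a)=\frac{\epsilon_\eta(m\lambda_1+q\lambda_2)}{m+q}\,\eta$, a parallel normal field of constant length that is nonzero exactly when $m\neq q$. Consequently $A_H=\frac{\epsilon_\eta(m\lambda_1+q\lambda_2)}{m+q}A_\eta$ is a constant multiple of $A_\eta$ and so acts as a constant scalar on each factor.

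Then I would verify the system (\ref{BSMC}) directly. Since $\nabla^{\perp}H=0$ we get $\Delta^{\perp}H=0$ and $\mathrm{grad}\,\langle H,H\rangle=0$, so the second (tangential) equation holds automatically. For the first equation the key computation is $\mathrm{Trace}\,B(A_H(\cdot),\cdot)$: splitting the frame over the two factors, each partial sum $\sum_{a\in M}\epsilon_a B^{\Phi}(E_a,E_a)$ loses its tangent-to-$P$ part by minimality of $\phi$ in $H^p$ (resp. $\varphi$ in $H^{n-p}$), leaving $m\lambda_1\epsilon_\eta\eta$ (resp. $q\lambda_2\epsilon_\eta\eta$); since $\lambda_1^2=\lambda_2^2=1$ this yields $\mathrm{Trace}\,B(A_H(\cdot),\cdot)=\epsilon_\eta(m+q)H$. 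With $\epsilon_\eta=-1$ and ambient curvature $C=-1$ the first equation of (\ref{BSMC}) becomes $0+\epsilon_\eta(m+q)H-(m+q)C\,H=(m+q)(\epsilon_\eta-C)H=0$, which holds precisely because the causal type of $\eta$ matches the sign of $C$. Thus $\Phi$ is biharmonic, and it is \emph{proper} biharmonic exactly when $H\neq0$, i.e. when $\dim M\neq\dim N$; this non-minimality condition seems to be required and I would add it to the statement (when $m=q$ one gets a minimal submanifold). Part (ii) is identical with $H^{\bullet}$ replaced by $S^{\bullet}$, a spacelike normal $\epsilon_\eta=+1$, and $C=+1$, so again $\epsilon_\eta-C=0$.

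The step I expect to be the main obstacle is the sign and causal-type bookkeeping: one must check that the timelike-versus-spacelike character of $\eta$ is exactly compensated by the sign of the ambient sectional curvature $C$, so that the curvature term $-(m+q)C\,H$ cancels $\mathrm{Trace}\,B(A_H(\cdot),\cdot)$. Everything else---the block structure of the product metric, the vanishing of the tangential contributions by minimality, and the parallelism of $\eta$---is routine, but this final cancellation is where the pseudo-Riemannian case genuinely differs from the Riemannian one and where a slip in the index conventions would be fatal.
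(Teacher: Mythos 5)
Your proposal is correct and is essentially the proof the paper intends: the paper does not write the argument out but defers to the method of Theorem \ref{GZ} and to Theorem 3.11 of \cite{CMO2}, and your computation---factoring $\Phi$ through the hypersurface $P=H^p_{t-1}(\tfrac{1}{\sqrt2})\times H^{n-p}_{s-t}(\tfrac{1}{\sqrt2})$ (resp.\ the spherical product), using minimality of $\phi$ and $\varphi$ to kill the tangent-to-$P$ part of the mean curvature, showing $\eta$ is parallel with $A_\eta=\pm\mathrm{Id}$ on the two blocks, and then verifying both equations of (\ref{BSMC}) via the cancellation $\epsilon_\eta-C=0$---is exactly that argument carried out in detail. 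Your flag that properness requires $\dim M\neq\dim N$ is also well taken: when $m=q$ the mean curvature $H=\frac{\epsilon_\eta(m\lambda_1+q\lambda_2)}{m+q}\eta$ vanishes and $\Phi$ is minimal, so this hypothesis (the analogue of the condition $n\neq 2p$ in Corollary \ref{SSn} and of the dimension condition in the Riemannian theorem of \cite{CMO2}) is indeed tacitly assumed but omitted in the paper's statement of Theorem \ref{GZ2}.
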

\section{Biharmonic hypersurfaces of Riemannian space forms vs. space-like biharmonic hypersurfaces of Lorentzian space forms}
An $n$-dimensional Lorentzian manifold is a pseudo-Riemannian manifold of index $(1, n-1)$ (or equivalently $(n-1, 1)$). This special type of pseudo-Riemannian manifolds are of great importance to general relativity as spacetimes are modeled as $4$-dimensional Lorentzian manifolds. In particular, Lorentzian space forms $\r^n_1$, $S^n_1(C)$, $H^n_1(C) $ are well known to be called Minkowski, de Sitter, and Anti-de Sitter space-times respectively. On the other hand, space-like submanifolds usually appear in the study of problems related to causality in the theory of general relativity. Also, space-like hypersurfaces with constant mean curvature are convenient as initial hypersurfaces for the Cauchy problem in arbitrary spacetime and for studying the prolongation of gravitational radiation (See e.g., \cite{BCM} for details). Based on these, we take a closer look at the space-like biharmonic hypersurfaces in Lorentzian space forms in this section. Some comparisons of biharmonic hypersurfaces of Riemannian space forms and space-like biharmonic hypersurfaces of Lorentzian space forms are made.\\
\indent Recall that a submanifold of a pseudo-Riemannian manifold is {\em space-like} if the induced metric on the submanifold is a Riemannian metric. It is well known that a hypersurface $M^n\longrightarrow N^{n+1}_1$ of a Lorentzian manifold is space-like if and only if its unit normal vector field $\xi$ satisfies $\langle \xi, \xi\rangle=-1$.\\
\indent Let $\varphi:M^{m}\longrightarrow N^{m+1}_1$ be an isometric immersion
of a space-like hypersurface into a Lorentzian manifold. We denote by $A$ the shape operator of
$\varphi$ with respect to $\xi$, a unit normal vector field to $\varphi(M)
\subset N$, and by $H=f\xi$ the mean curvature vector field of
$\varphi$ (f the mean curvature function). Then we have
\begin{theorem}\label{PE114}
Let $\varphi:M^{m}\longrightarrow N^{m+1}_1$ be a space-like hypersurface in a Lorentzian manifold with mean curvature vector field $H=f\xi$. Then
$\varphi$ is biharmonic if and only if
\begin{equation}
\begin{cases}
\Delta f+f |A|^{2}-f{\rm
Ric}^N(\xi,\xi)=0,\\
A\,({\rm grad}\,f) -\frac{m}{2} f{\rm grad}\, f
-\, f \,({\rm Ric}^N\,(\xi))^{\top}=0,
\end{cases}
\end{equation}
where $Ric : T_pM\longrightarrow T_pM$ denotes the Ricci operator of
the ambient space defined by $\langle Ric\, (Z), W\rangle={\rm
Ric}^N (Z, W)$.
\end{theorem}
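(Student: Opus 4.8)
The plan is to obtain this result as an immediate specialization of Corollary \ref{CD1}, so almost all of the work has already been done; the only genuinely new input needed is the sign of the normal direction. First I would observe that because $M^m$ is a space-like hypersurface of the Lorentzian manifold $N^{m+1}_1$, its induced metric is Riemannian, so in the notation of Corollary \ref{CD1} the index of the submanifold is $t=0$ and all tangential signs satisfy $\epsilon_i = 1$. By the characterization recalled just before the theorem statement, a hypersurface of a Lorentzian manifold is space-like precisely when its unit normal $\xi$ is time-like, i.e. $\langle \xi, \xi\rangle = -1$; hence $\epsilon_{m+1} = -1$.

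Next I would simply substitute $\epsilon_{m+1} = -1$ into the two biharmonic equations of Corollary \ref{CD1}, writing $A = A_\xi$ for the shape operator. In the first equation the coefficient $-\epsilon_{m+1}$ becomes $+1$, so $\Delta f - \epsilon_{m+1} f\,(|A_\xi|^2 - {\rm Ric}^N(\xi,\xi))$ turns into $\Delta f + f\,|A|^2 - f\,{\rm Ric}^N(\xi,\xi)$. In the second equation the coefficient $\epsilon_{m+1}\frac{m}{2}$ becomes $-\frac{m}{2}$, producing $A({\rm grad}\,f) - \frac{m}{2} f\,{\rm grad}\,f - f\,({\rm Ric}^N(\xi))^\top$. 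These are exactly the two displayed equations in the statement, so the theorem follows.

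The only point worth checking carefully is that the $|A|^2$ appearing here coincides with the quantity $|A_\xi|^2$ of Corollary \ref{CD1}: since the hypersurface is Riemannian, all tangential signs are $+1$ and the pseudo-Riemannian trace defining $|A_\xi|^2$ reduces to the ordinary Hilbert--Schmidt norm of the shape operator, so no hidden sign corrections enter. I do not expect any real obstacle in this argument---the full content of the theorem is already contained in Corollary \ref{CD1}, and the proof amounts to identifying the unit normal of a space-like hypersurface as time-like and then carrying out the corresponding sign bookkeeping.
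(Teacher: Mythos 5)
Your proposal is correct and is exactly the paper's own proof: the authors also obtain Theorem \ref{PE114} by setting $\epsilon_{m+1}=-1$ in Corollary \ref{CD1}, justified by $\langle\xi,\xi\rangle=-1$ for a space-like hypersurface. Your additional sign bookkeeping and the check that $|A|^2=|A_\xi|^2$ (since the induced metric is Riemannian) are accurate elaborations of the same one-line argument.
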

\begin{proof}
This follows from Corollary \ref{CD1} with $\epsilon_{m+1}=-1$ since $\langle \xi, \xi\rangle=-1$ for a space-like hypersurface.
\end{proof}
As a straightforward consequence, we have
\begin{corollary}
A space-like hypersurface in a Lorentzian Einstein space $(N^{m+1}_1,h)$ is biharmonic if
and only if its mean curvature function $f$ is a solution of the
following PDEs
\begin{equation}\label{Ein}
\begin{cases}
\Delta f+f\,|A|^{2}+ \frac{r}{m+1}f=0,\\
A(\,{\rm grad}\,f) -\frac{m}{2}\,f{\rm grad}\, f
=0,
\end{cases}
\end{equation}
In particular, a space-like hypersurface $\varphi :(M^{m}, g)\longrightarrow
(N^{m+1}_1(C), h)$ in a Lorentzian space of constant sectional curvature $C$ is
biharmonic if and only if its mean curvature function $f$ is a
solution of the following PDEs
\begin{equation}\label{M1}
\begin{cases}
\Delta f+f\,|A|^{2}+ mCf=0,\\
A\,({\rm grad}\,f) -\frac{m}{2}\, f{\rm grad}\, f=0.
\end{cases}
\end{equation}
\end{corollary}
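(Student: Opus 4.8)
The plan is to deduce this corollary directly from Theorem~\ref{PE114} by evaluating, under the Einstein hypothesis, the two curvature terms ${\rm Ric}^N(\xi,\xi)$ and $({\rm Ric}^N(\xi))^{\top}$ that appear in the biharmonic system there. Recall that an $(m+1)$-dimensional Einstein manifold satisfies ${\rm Ric}^N = \frac{r}{m+1}\,h$, where $r$ denotes its scalar curvature; equivalently, its Ricci operator acts as the scalar multiplication ${\rm Ric}^N(Z)=\frac{r}{m+1}Z$ on every tangent vector $Z$. The whole argument is then a matter of substituting this relation into Theorem~\ref{PE114}.

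First I would treat the normal Ricci term in the first equation. Since $\varphi$ is space-like, its unit normal $\xi$ is time-like, so $\langle \xi,\xi\rangle=-1$, and the Einstein condition gives ${\rm Ric}^N(\xi,\xi)=\frac{r}{m+1}\langle\xi,\xi\rangle=-\frac{r}{m+1}$. Plugging this into the first equation $\Delta f+f|A|^2-f\,{\rm Ric}^N(\xi,\xi)=0$ of Theorem~\ref{PE114} converts the term $-f\,{\rm Ric}^N(\xi,\xi)$ into $+\frac{r}{m+1}f$, which is exactly the first equation of (\ref{Ein}).

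Next I would dispose of the tangential Ricci term in the second equation. Because the Ricci operator of an Einstein space is a scalar multiple of the identity, ${\rm Ric}^N(\xi)=\frac{r}{m+1}\xi$ is proportional to the normal vector $\xi$ and hence has vanishing tangential part, $({\rm Ric}^N(\xi))^{\top}=0$. The second equation of Theorem~\ref{PE114} therefore loses its last term and reduces precisely to the second equation of (\ref{Ein}), establishing the first assertion.

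For the \emph{in particular} statement I would specialize to a Lorentzian space form $N^{m+1}_1(C)$, which is Einstein. A standard trace computation for a space of constant sectional curvature $C$ in dimension $m+1$ gives ${\rm Ric}^N=mC\,h$, hence scalar curvature $r=m(m+1)C$ and $\frac{r}{m+1}=mC$; inserting this value of $r$ into (\ref{Ein}) yields (\ref{M1}). There is no genuine obstacle here, since the argument is pure substitution; the only point that demands care is consistent bookkeeping of the signs coming from the Lorentzian (index-one) convention, in particular the relation $\langle\xi,\xi\rangle=-1$ which flips the sign of the normal Ricci term.
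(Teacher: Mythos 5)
Your proof is correct and follows essentially the same route as the paper: substitute the Einstein condition ${\rm Ric}^N=\frac{r}{m+1}h$ into Theorem \ref{PE114} (giving $({\rm Ric}^N(\xi))^{\top}=0$ and an explicit value for ${\rm Ric}^N(\xi,\xi)$), then specialize to $r=m(m+1)C$ for a space form. In fact you are more careful than the paper on the one delicate point: the paper's own proof asserts ${\rm Ric}^N(\xi,\xi)=\frac{r}{m+1}$, which is inconsistent with its stated Einstein formula because the time-like normal satisfies $\langle \xi,\xi\rangle=-1$; your value ${\rm Ric}^N(\xi,\xi)=-\frac{r}{m+1}$ is the one actually needed for the term $-f\,{\rm Ric}^N(\xi,\xi)$ in Theorem \ref{PE114} to become $+\frac{r}{m+1}f$ in (\ref{Ein}) and $+mCf$ in (\ref{M1}).
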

\begin{proof}
It is well known that if $(N^{m+1},h)$ is an Einstein manifold then
${\rm Ric} (Z,W)=\frac{r}{m+1}h(Z,W)$ for any $Z, W \in TN$ and
hence $(Ric\,(\xi))^{\top}=0$ and ${\rm
Ric}^N(\xi,\xi)=\frac{r}{m+1}$. When $(N^{m+1}(C),h)$ is a space of
constant sectional curvature $C$, then it is an Einstein space with
the scalar curvature $r=m(m+1)C$. From these the corollary follows.
\end{proof}
\begin{remark}
We remark that a hypersurface $\varphi :(M^{m}, g)\longrightarrow
(N^{m+1}(C), h)$ in a Riemannian space form of constant sectional curvature $C$ is
biharmonic if and only if its mean curvature function $f$ is a
solution of the following PDEs, which was obtained by different
authors in several steps (see \cite{Ji2}, \cite{Ch1} and \cite{CMO2})
\begin{equation}
\begin{cases}
\Delta f-f\,|A|^{2}+ mCf=0,\\
A\,({\rm grad}\,f) +\frac{m}{2}\, f{\rm grad}\, f=0.
\end{cases}
\end{equation}
So, the biharmonic equation for space-like hypersurfaces in Lorentzian space forms differs from the biharmonic equation for hypersurfaces in Riemannian space forms by a minus sign in the second term of each equation in the system.
\end{remark}
Using Corollary \ref{SSn} we have the following classification of space-like biharmonic hypersurfaces with at most two distinct principal curvatures.
\begin{corollary}
$($\em I$)$ A space-like biharmonic hypersurface in a Lorentzian space form $S^{n+1}_1(1)$ or $\r^{n+1}_1$ with at most two distinct principal curvatures is maximal, i.e., $H=0$; \\
$($ II $)$ A space-like biharmonic hypersurface in a Lorentzian space form $H^{n+1}_1(1)$ with at most two distinct principal curvatures is either a part of either $H^{n}(\frac{1}{\sqrt{2}})$ or a part of $H^{p}(\frac{1}{\sqrt{2}})\times H^{q}(\frac{1}{\sqrt{2}})$ with $p+q=n, p\ne q$.
\end{corollary}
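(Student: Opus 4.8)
The plan is to reduce everything to the complete classification in Corollary \ref{SSn} (together with Theorem \ref{MT3} for the flat ambient space) and then simply read off which members of that list are space-like. A preliminary observation makes this legitimate: for a space-like hypersurface the induced metric is Riemannian, so the shape operator $A$ is self-adjoint on a positive-definite inner product space and is therefore automatically diagonalizable; hence no diagonalizability hypothesis need be added in order to invoke Corollary \ref{SSn}. Next recall that a hypersurface $M^n\longrightarrow N^{n+1}_1$ of a Lorentzian space form is space-like exactly when its unit normal is time-like, i.e. $\epsilon_{n+1}=-1$, and then the induced index is $t=0$. So throughout I set $s=1$ and keep only those hypersurfaces appearing in Corollary \ref{SSn} whose index, as a pseudo-Riemannian manifold, equals $0$. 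The only arithmetic input needed is that the index of a pseudo-Riemannian product is the sum of the indices of its factors, that $S^n_s$ has index $s$, and that $H^n_{s-1}$ has index $s-1$.

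For part (I), consider first the de Sitter space $S^{n+1}_1(1)$, so $s=1$. Substituting $s=1$ into Corollary \ref{SSn}(i), the proper biharmonic hypersurfaces with at most two distinct principal curvatures are $S^n_1(\frac{1}{\sqrt{2}})$ and the products $S^p_k(\frac{1}{\sqrt{2}})\times S^{n-p}_{1-k}(\frac{1}{\sqrt{2}})$, where $k$ denotes the index of the first factor. The single factor has index $1$, and each product has index $k+(1-k)=1$; in every case the index is $1$, never $0$. Hence none of these hypersurfaces is space-like, so there is no proper biharmonic space-like hypersurface with at most two distinct principal curvatures in $S^{n+1}_1(1)$, and every such biharmonic hypersurface must be minimal, i.e. $H=0$. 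For the Minkowski case $\r^{n+1}_1$ the curvature is $C=0$: the classification in Theorem \ref{MT3} produces pseudo-spheres only for $C>0$ and pseudo-hyperbolic spaces only for $C<0$, so for $C=0$ the list is empty and again every such biharmonic hypersurface is maximal.

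For part (II), take the Anti-de Sitter space $H^{n+1}_1(1)$, again $s=1$ but now $C<0$. Substituting $s=1$ into Corollary \ref{SSn}(ii), the candidates are the single $H^n_{s-1}(\frac{1}{\sqrt{2}})=H^n_0(\frac{1}{\sqrt{2}})=H^n(\frac{1}{\sqrt{2}})$ and the products $H^p_{k-1}(\frac{1}{\sqrt{2}})\times H^{n-p}_{1-k}(\frac{1}{\sqrt{2}})$. The single factor has index $s-1=0$, hence is space-like; the product has index $(k-1)+(1-k)=0$, hence is also space-like, and non-negativity of the two factor indices ($k-1\ge 0$ and $1-k\ge 0$) forces $k=1$, which collapses the product to $H^p(\frac{1}{\sqrt{2}})\times H^{n-p}(\frac{1}{\sqrt{2}})$ with $p+q=n$ and the standing condition $n\ne 2p$, i.e. $p\ne q$. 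These are precisely the hypersurfaces listed in (II).

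The main obstacle is the index bookkeeping in the two middle steps: one must correctly compute the index of each member of the classification list, as a single pseudo-sphere/pseudo-hyperbolic space or as a product of two of them, and compare it against the space-like condition $t=0$. The entire content of the corollary is really this asymmetry — sphere-type biharmonic hypersurfaces carry all $s$ of the ambient time directions and so have index $s$ (space-like normal), whereas hyperbolic-type ones have a time-like normal and index $s-1$. Consequently at $s=1$ only the Anti-de Sitter family survives as space-like, which is exactly why $S^{n+1}_1(1)$ and $\r^{n+1}_1$ admit no proper space-like examples while $H^{n+1}_1(1)$ does.
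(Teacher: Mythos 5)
Your proposal is correct and follows essentially the same route as the paper, whose entire proof is the remark ``Using Corollary \ref{SSn}'': you read off from Corollary \ref{SSn} (and, for the flat case, Theorem \ref{MT3}) which members of the classification have index $0$, and the index bookkeeping you carry out is exactly right. Your preliminary observation that a space-like hypersurface automatically has diagonalizable shape operator---so that the diagonalizability hypothesis of Corollary \ref{SSn} can legitimately be dropped from the statement---is a point the paper leaves implicit, and supplying it makes the deduction airtight.
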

\begin{remark}
Based on what we know so far, the set of proper biharmonic hypersurfaces in the Riemannian space form $S^m(1)$ (respectively, in $\r^m$ and $H^m(1)$) and the set of space-like proper biharmonic hypersurfaces in the Lorentzian space form $H^m_1(1)$ (respectively, in $\r^m_1$ and $S^m_1(1)$) have the same ``picture". It would be interesting to know if the systems
\begin{eqnarray}
\begin{cases}
\Delta f-f\,|A|^{2}+ mf=0,\\
A\,({\rm grad}\,f) +\frac{m}{2}\, f{\rm grad}\, f=0, \;\;
\end{cases}
{\rm and}\;\; \begin{cases}
\Delta f+f\,|A|^{2}- mf=0,\\
A\,({\rm grad}\,f) -\frac{m}{2}\, f{\rm grad}\, f=0
\end{cases}
\end{eqnarray}
have the same solution set. Similarly, one would like to know if the systems
\begin{eqnarray}
\begin{cases}
\Delta f-f\,|A|^{2}=0,\\
A\,({\rm grad}\,f) +\frac{m}{2}\, f{\rm grad}\, f=0, \;\;
\end{cases}
{\rm and}\;\; \begin{cases}
\Delta f+f\,|A|^{2}=0,\\
A\,({\rm grad}\,f) -\frac{m}{2}\, f{\rm grad}\, f=0
\end{cases}
\end{eqnarray}
have the same solution set. 
\end{remark}


\begin{thebibliography}{99}
%\bibitem{AKY} N. Abe, N. Koike and S. Yamaguchi, {\em Congruence theorems for proper semi-Riemannian hypersurfaces in a real space form}, Yokohama Math. J. 35 (1987), no. 1-2, 123-136.
%\bibitem{BW1} P. Baird and  J. C. Wood, {\em Harmonic morphisms between Riemannian manifolds}, London Math. Soc. Monogr. (N.S.) No. 29, Oxford Univ. Press (2003).
\bibitem{BMO1} A. Balmus, S. Montaldo and C. Oniciuc, {\em Classification results for biharmonic submanifolds in spheres}, Israel J. Math. 168 (2008), 201--220.
%\bibitem{BMO2} A. Balmus, S. Montaldo and C. Oniciuc, {\em Biharmonic hypersurfaces in $4$-dimensional space forms}, Math. Nachr. 283(12) (2010), 1696–-1705.
\bibitem{BMO2} A. Balmus, S. Montaldo, C. Oniciuc, {\em Biharmonic PNMC Submanifolds in Spheres}, Ark. Mat. 51(2013), 197-221.
\bibitem{BCM}  A. Brasil, Rosa M. B.  Chaves, and M.  Mariano, {\em Complete spacelike submanifolds with parallel mean curvature vector in a semi-Riemannian space form},  J. Geom. Phys. 56 (2006), no. 10, 2177-2188.
%\bibitem{CMO1} R. Caddeo, S.  Montaldo, and C. Oniciuc, {\em Biharmonic submanifolds of $S\sp 3$}. Internat. J. Math. 12 (2001), no. 8, 867--876.
\bibitem{CMO2} R. Caddeo, S. Montaldo and C. Oniciuc, {\em Biharmonic submanifolds in spheres},
 Israel J. Math. 130 (2002), 109--123.
\bibitem{Ch0} B. -Y. Chen, {\em Finite type submanifolds in pseudo-Euclidean spaces and applications}, Kodai Math. J., 8 (1985), 358-374.
\bibitem{Ch1} B. -Y. Chen, {\em Some open problems and conjectures on submanifolds of finite
type}, Soochow J. Math. 17 (1991), no. 2, 169--188.
\bibitem{Ch2} B. -Y. Chen, {\em Classification of marginally trapped Lorentzian flat surfaces in $E^4_2$ and its application to biharmonic surfaces},  J. Math. Anal. Appl. 340 (2008), no. 2, 861-875.
\bibitem{Ch3} B. Y. Chen, {\em Total Mean Curvature and Submanifolds of Finite Type}, 2nd Edition, World Scientific, 2015.
\bibitem{CI} B. Y. Chen and S. Ishikawa, {\em Biharmonic pseudo-Riemannian submanifolds
in pseudo-Euclidean spaces}, Kyushu J. Math. 52 (1998), no. 1,
167--185.
\bibitem{DKP} F. Defever, G. Kaimakamis and V. Papantoniou, {\em Biharmonic hypersurfaces of the $4$-dimensional
semi-Euclidean space $\mathbb{E}^4_s$}, J. Math. Anal. Appl. 315 (2006) 276-286.
\bibitem{Di} I. Dimitri\'c, {\em Submanifolds of $E\sp m$ with harmonic mean curvature vector}, Bull. Inst. Math. Acad. Sinica 20 (1992), no. 1, 53--65.
\bibitem{EL} J. Eells and L. Lemaire, {\em Selected topics in harmonic maps}, CBMS, 50, Amer. Math. Soc. (1983).
%\bibitem{Fu1} Y. Fu, {\em On bi-conservative surfaces in Minkowski 3-space},  J. Geom. Phys. 66 (2013), 71-79. 
\bibitem{Fu} Y. Fu, {\em Biharmonic submanifolds with parallel mean curvature vector in pseudo-Euclidean spaces}, Math. Phys. Anal. Geom. 16 (2013), no. 4, 331-344.
%\bibitem{Fu3} Y. Fu, {\em Explicit classification of biconservative surfaces in Lorentz 3-space forms},  Ann. Mat. Pura Appl. (4) 194 (2015), no. 3, 805-822.
%\bibitem{FT}  Y. Fu and N. C. Turgay, {\em Complete classification of biconservative hypersurfaces with diagonalizable shape operator in Minkowski 4-space}, preprint, Xiv:1502.05473, 2015.
%\bibitem{Ga} O. J. Garay, {\em A classification of certain $3$-dimensional conformally flat Euclidean hypersurfaces}, Pacific J. Math. 162 (1994), no. 1, 13--25.
%\bibitem{HV} T. Hasanis and T. Vlachos, {\em Hypersurfaces in $E\sp 4$ with harmonic mean curvature vector field}, Math. Nachr. 172 (1995), 145--169.
\bibitem{Ji1} G. Y. Jiang, {\em $2$-harmonic isometric immersions between Riemannian manifolds}. Chinese Ann. Math. Ser. A, 7 (2) (1986), 130--144.
\bibitem{Ji2} G. Y. Jiang, {\em $2$-Harmonic maps and their first and second variational formulas}, Chin. Ann. Math. Ser. A, 7(4) (1986) 389-402.
\bibitem{Ji3} G. Y. Jiang, {\em Some non-existence theorems of $2$-harmonic isometric immersions into Euclidean spaces}, Chin. Ann. Math. Ser. A, 8 (1987) 376-383.
%\bibitem{KKN} U. -H. Ki, H.-J. Kim, and H. Nakagawa, {\em On space-like hypersurfaces with constant mean curvature of a Lorentz space form}, Tokyo J. Math. 14 (1991), no. 1, 205-216.
% \bibitem{KN} S. Kobayashi and K. Nomizu, {\em Foundations of differential geometry}, Vol. II. John Wiley $\&$ Sons, Inc., New York, 1966.
\bibitem{LD} J. Liu and L. Du {\em Classification of proper biharmonic hypersurfaces in pseudo-Riemannian space forms},  Differential Geom. Appl. 41 (2015), 110-122. 
\bibitem{LDZ} J. Liu, L. Du and J. Zhang, {\em Minimality on biharmonic space-like submanifolds in pseudo-Riemannian space forms}  J. Geom. Phys. 92 (2015), 69-77.
%\bibitem{LMO} E. Loubeau, S. Montaldo, and C. Oniciuc, The bibliograph of biharmonic maps, http://beltrami.sc.unica.it/biharmonic/
%\bibitem{LM} E. Loubeau, S. Montaldo,{\em Examples of biminimal surfaces of Thurston's three-dimensional geometries}, Mat. Contemp. 29 (2005), 1--12.
% \bibitem{LOU} E. Loubeau and Y. -L. Ou, {\em Biharmonic maps and morphisms from conformal mappings}, preprint, 2008.
\bibitem{MOR} S. Montaldo, C. Oniciuc, A. Ratto, {\em Proper biconservative immersions into the Euclidean space}, Ann. di Mate. Pura ed Applicata, DOI:10.1007/s10231-014-0469-4.
%\bibitem{Ni} C. X. Nie, {\em An application of maximum principle to space-like hypersurfaces with constant mean curvature in anti-de Sitter space}, Monatsh. Math. 168 (2012), no. 1, 65-73.
%\bibitem{On} B. O'Neill, {\em Semi-Riemannian geometry with applications to relativity}, Academic Press, New York, 1983.
\bibitem{Ou1} Y. -L. Ou, {\em Biharmonic hypersurfaces in Riemannian manifolds}, Pacific J. of Math, 248 (1), (2010), 217-232.
\bibitem{Ou2} Y. -L. Ou, {\em Some recent progress of biharmonic submanifolds},   arXiv:1511.09103, to appear in AMS Contemporary Mathematics, 2016.
\bibitem{Oy} C. Z. Ouyang, {\em  2-harmonic space-like submanifolds of a pseudo-Riemannian space form}, (Chinese),  Chinese Ann. Math. Ser. A 21 (2000), no. 6, 649-654. 
\bibitem{Sa} T. Sasahara, {\em Biharmonic submanifolds in nonflat Lorentz 3-space forms}, Bull. Aust. Math. Soc. 85 (2012), no. 3, 422-432. 
%\bibitem{Tu} N. C. Turgay, {\em Some classifications of biharmonic Lorentzian hypersurfaces in Minkowski 5-space $\mathbb E^5_1$}, preprint, arXiv:1408.5376, 2014.
\bibitem{Zh}  W. Zhang, {\em Biharmonic space-like hypersurfaces in pseudo-Riemannian space}, preprint, arXiv:0808.1346, 2008.
\end{thebibliography}
\end{document}